\documentclass{amsart}

\allowdisplaybreaks

\usepackage{microtype}
\usepackage{hyperref}
\usepackage{amsmath}
\usepackage{amssymb}
\usepackage{amsopn}
\usepackage{psfrag}
\usepackage{mathrsfs}
\usepackage[all]{xy} 

\usepackage[utf8]{inputenc}
\usepackage{tikz}

\newcommand{\triplerightarrow}{%
\tikz[minimum height=0ex]
  \path[->]
   node (a)            {}
   node (b) at (1em,0) {}
  (a.north)  edge (b.north)
  (a.center) edge (b.center)
  (a.south)  edge (b.south);%
}

\xyoption{2cell} 

\UseTwocells

\newcommand{\BB}{\mathcal{B}}
\newcommand{\CC}{\mathcal{C}}
\newcommand{\DD}{\mathcal{D}}
\newcommand{\EE}{\mathcal{E}}
\newcommand{\GG}{\mathcal{G}}
\newcommand{\HH}{\mathcal{H}}
\newcommand{\KK}{\mathcal{K}}
\newcommand{\TT}{\mathcal{T}}
\newcommand{\XX}{\mathcal{X}}
\newcommand{\YY}{\mathcal{Y}}
\newcommand{\ZZ}{\mathcal{Z}}

\newcommand{\sA}{\mathscr{A}}
\newcommand{\sB}{\mathscr{B}}

\newcommand{\define}{\textbf}

\newcommand{\onto}{\twoheadrightarrow}

\newcommand{\lact}{\triangleright}
\newcommand{\ract}{\triangleleft}

\DeclareMathOperator{\Ad}{Ad}

\DeclareMathOperator{\Hom}{Hom}
\DeclareMathOperator{\Aut}{Aut}
\DeclareMathOperator{\id}{id}

\DeclareMathOperator{\Pontr}{Pontr}

\DeclareMathOperator{\op}{op}

\DeclareMathOperator{\End}{End}
\DeclareMathOperator{\st}{st}
\DeclareMathOperator{\Cat}{Cat}
\DeclareMathOperator{\rev}{rev}
\DeclareMathOperator{\Mod}{Mod}

\DeclareMathOperator{\bicolim}{bicolim}

\newtheorem{thm}{Theorem}[section]
\newtheorem{prop}[thm]{Proposition}
\newtheorem{cor}[thm]{Corollary}
\newtheorem{lem}[thm]{Lemma}

\theoremstyle{definition}
\newtheorem{defn}[thm]{Definition}

\newtheorem{remark}[thm]{Remark}

\numberwithin{equation}{thm}

\title{Extensions of groups by braided 2-groups}
\author{Evan Jenkins}
\address{Department of Mathematics, University of Chicago, Chicago, IL 60637}
\email{ejenkins@math.uchicago.edu}

\begin{document}

\begin{abstract}
We classify extensions of a group $G$ by a braided 2-group $\mathcal{B}$ as defined by Drinfeld, Gelaki, Nikshych, and Ostrik. We describe such extensions as homotopy classes of maps from the classifying space of $G$ to the classifying space of the 3-group of braided $\mathcal{B}$-bitorsors. The Postnikov system of the latter space contains a generalization of the classical Pontryagin square to the setting of local coefficients, which has been previously discussed by Baues and more recently, in a setting close to ours, by Etingof, Nikshych, and Ostrik. We give an explicit cochain-level description of this Pontryagin square for group cohomology.
\end{abstract}

\maketitle  

\section{Introduction}

A classical problem in homological algebra is to classify extensions of a group $G$ by another group $K$, i.e., groups $E$ equipped with a surjection $\partial: E \onto G$ and an identification $K \cong \partial^{-1}(e)$. When $K = A$ is an abelian group, the action of an element of $E$ on an element of $A$ by conjugation depends only on its image in $G$, so extensions of $G$ by $A$ include the data of an action of $G$ as automorphisms of $A$. If we fix such an action, then extensions with that action are classified by $H^2(G, A)$ (see, for example, \cite[6.6]{weibel}).

In \cite[Appendix E]{dgno}, a categorification of the notion of an extension of a group by an abelian group is defined, in which the abelian group $A$ is replaced by a braided 2-group $\BB$ (see \cite{baezlauda} for basic results about 2-groups, which have also been studied under the names gr-category and categorical group, and \cite{joyalstreet} for basic results about braided monoidal categories). Given such an extension one has an underlying action of $G$ as braided autoequivalences of the braided 2-group $\BB$, and also an underlying extension of $G$ by the abelian group $A = \pi_0(\BB)$ of isomorphism classes of objects of $\BB$. It is natural to ask under what circumstances these two pieces of data will determine an extension of $G$ by $\BB$ and how unique such an extension is.

In this paper, we show that we can lift these data to an extension if and only if a certain cohomology class in $H^4(G, H)$ vanishes, where $H = \pi_1(\BB)$ is the abelian group of isomorphisms of the unit object in $\BB$, and the action of $G$ on $H$ is induced from the action of $G$ on $\BB$. This cohomology class comes from a function $H^2(G, A) \to H^4(G, H)$ (determined by a fixed braided 2-group $\BB$ and an action of $G$ on $\BB$) that generalizes the classical Pontryagin square, which can be recovered in our language by taking the trivial action of $G$ on $\BB$. If this obstruction vanishes, then extensions form a torsor for $H^3(G, H)$.

These results are parallel to the results of \cite{eno} on the related problem of classifying braided $G$-crossed fusion categories. In that paper, a braided $G$-crossed fusion category $\CC$ is viewed as a $G$-indexed family of invertible bimodules over the neutral component. Analogously, we will view extensions of $G$ by a braided 2-group $\BB$ as $G$-indexed families of braided bitorsors over $\BB$. We then use obstruction theory to obtain our classification.

Sections 2 through 4 set up the machinery of braided bitorsors. Section 5 relates extensions and bitorsors by a construction of Grothendieck. Section 6 contains our classification result. Section 7 contains an explicit cochain-level description of the Pontryagin square defined in Section 6.

The author would like to thank his advisor, Vladimir Drinfeld, for suggesting this topic and providing inspiration, guidance, and careful reading of many drafts. The author would also like to thank Peter May, Daniel Sch\"appi, Mike Shulman, and Ross Street for helpful discussions.

\section{Torsors for 2-groups}

We can define torsors for 2-groups much the same as we do for groups. We first review the notion of modules for monoidal categories.

A monoidal category $\CC$ can be viewed as a one-object bicategory, which we will denote by $\CC[1]$. This can be viewed as the ``delooping'' of $\CC$, a notion we will revisit in Section \ref{obstruction}. For now, we note only that strong monoidal functors between monoidal categories correspond to pseudofunctors between their deloopings, and that $\CC[1]^{\op} = (\CC^{\rev})[1]$, where $\CC^{\rev}$ denotes the category $\CC$ with reversed tensor product.

\begin{defn}
Let $\mathcal{C}$ be a monoidal category. A \define{left (resp. right) module} (or \define{module category}) over $\mathcal{C}$ is a pseudofunctor $\mathcal{X}: \CC[1] \to \Cat$ (resp. $\mathcal{X}: \CC[1]^{\op} \to \Cat$). By abuse of notation, we denote the image of the unique object of $\CC[1]$ by $\mathcal{X}$. We denote by ${_\mathcal{C}}\Mod$ and $\Mod_{\mathcal{C}}$ the 2-categories of left and right $\mathcal{C}$-modules, respectively.
\end{defn}

We will write a left action of an element $c \in \CC$ on an element $x \in \XX$ by $c \lact x$, and a right action as $x \ract c$. For the rest of this subsection, we will only deal with left $\mathcal{C}$-modules. Completely analogous definitions and proofs work with ``left'' everywhere replaced by ``right.''

\begin{lem}
\label{reflect}
Let $F: \XX \to \YY$ be a morphism of left $\CC$-modules, and suppose further that $F$ is an equivalence of categories. Then $F$ is an equivalence of $\CC$-modules.
\end{lem}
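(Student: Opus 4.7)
The plan is to exploit the interpretation of a left $\CC$-module as a pseudofunctor $\XX \colon \CC[1] \to \Cat$ and of a morphism of $\CC$-modules as a pseudonatural transformation between two such pseudofunctors. Under this dictionary, the claim reduces to the standard fact that a pseudonatural transformation between $\Cat$-valued pseudofunctors is an equivalence (in the 2-category of pseudofunctors, pseudonatural transformations, and modifications) if and only if each of its 1-cell components is an equivalence in $\Cat$. Since $\CC[1]$ has a single object, there is only one such component, namely the underlying functor $F$, and the hypothesis supplies the input.

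To make the construction explicit, I would first promote $F$ to an adjoint equivalence of categories, obtaining a quasi-inverse $G \colon \YY \to \XX$ and natural isomorphisms $\eta \colon \Id_\XX \Rightarrow GF$ and $\epsilon \colon FG \Rightarrow \Id_\YY$ satisfying the triangle identities. Writing $\phi_{c,x} \colon F(c \lact x) \xrightarrow{\sim} c \lact F(x)$ for the coherent isomorphisms that exhibit $F$ as a $\CC$-module morphism, the required isomorphisms $\psi_{c,y} \colon G(c \lact y) \xrightarrow{\sim} c \lact G(y)$ can be defined by conjugating $\phi^{-1}$ through $\eta$ and $\epsilon$, namely as the composite
\[
G(c \lact y) \xrightarrow{G(c \lact \epsilon_y^{-1})} G(c \lact FG(y)) \xrightarrow{G(\phi_{c,G(y)}^{-1})} GF(c \lact G(y)) \xrightarrow{\eta_{c \lact G(y)}^{-1}} c \lact G(y).
\]

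What remains is to verify that $(G,\psi)$ satisfies the associator and unitor coherence axioms for a morphism of left $\CC$-modules, and that $\eta$ and $\epsilon$ upgrade to 2-morphisms in ${_\CC}\Mod$. This is the only place where any real work appears; it is formally identical to the classical argument that a quasi-inverse of a monoidal equivalence inherits a canonical monoidal structure, and consists of pentagon-and-triangle diagram chases that combine the coherence of $\phi$ with the triangle identities for $(\eta,\epsilon)$ and naturality of everything in sight. I expect no essential obstacle, only bookkeeping, so one could alternatively bypass the explicit construction altogether and appeal directly to the general fact that pointwise equivalences of pseudonatural transformations are equivalences in the functor 2-category.
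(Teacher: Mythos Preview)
Your argument is correct, but it follows a different path from the paper's. The paper dispatches the lemma in one line by observing that the forgetful 2-functor ${_\CC}\Mod \to \Cat$ is 2-monadic and invoking the general fact that monadic 2-functors reflect adjoint equivalences. Your approach instead stays with the definition of a $\CC$-module as a pseudofunctor $\CC[1] \to \Cat$ and appeals to the componentwise criterion for a pseudonatural transformation to be an equivalence; since $\CC[1]$ has a single object, the hypothesis on $F$ is exactly what is needed. The explicit conjugation formula you write down for $\psi$ is the standard one, and the coherence verifications, while tedious, go through as you indicate. The paper's route is shorter and situates the result within a general framework, at the cost of importing the machinery of 2-monads; your route is more self-contained and yields concrete formulas for the inverse module structure, which can be useful if one actually needs to compute with it later.
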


\begin{proof}
The forgetful 2-functor ${_\CC}\Mod \to \Cat$ is monadic, and hence it reflects adjoint equivalences (see \cite{lecreurer}).
\end{proof}

Recall that a 2-group is a monoidal category in which all objects and morphisms are invertible.

\begin{defn}
Let $\GG$ be a 2-group. A left $\mathcal{G}$-module $\mathcal{X}$ is a \define{left $\mathcal{G}$-torsor} if $\XX$ is nonempty and the characteristic map $\chi = (a, \pi_2): \mathcal{G} \times \mathcal{X} \to \mathcal{X} \times \mathcal{X}$ is an equivalence.
\end{defn}

\begin{defn}
The \define{trivial left $\mathcal{G}$-module}, denoted by $\mathcal{G}$, is $\mathcal{G}$ equipped with the action of left multiplication.
\end{defn}

\begin{defn}
The action of $\GG$ on $\XX$ is \define{essentially simply transitive} if, for every $x \in \XX$, the map $\GG \to \XX$, $g \mapsto g \lact x$ is an equivalence of categories.
\end{defn}

\begin{thm}
\label{2torsors}
Let $\mathcal{G}$ be a 2-group, $\mathcal{X}$ a left $\mathcal{G}$-module. The following are equivalent.
\begin{enumerate}
\item \label{2a} $\mathcal{X}$ is a left $\mathcal{G}$-torsor.
\item \label{2b} $\XX$ is nonempty, and the action of $\mathcal{G}$ on $\mathcal{X}$ is essentially simply transitive.
\item \label{2c} $\mathcal{X}$ is equivalent to the trivial left $\mathcal{G}$-module.
\end{enumerate}
\end{thm}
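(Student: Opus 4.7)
The plan is to prove the cyclic chain of implications $(\ref{2b}) \Rightarrow (\ref{2c}) \Rightarrow (\ref{2a}) \Rightarrow (\ref{2b})$, which avoids redundant work.

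For $(\ref{2b}) \Rightarrow (\ref{2c})$, I would pick any $x_0 \in \XX$ (using nonemptiness) and define $F_{x_0}: \GG \to \XX$ by $g \mapsto g \lact x_0$. The action associators of $\XX$ upgrade $F_{x_0}$ to a morphism of left $\GG$-modules, with the required coherence inherited from the pseudofunctor structure on $\XX$. By essential simple transitivity, $F_{x_0}$ is an equivalence of underlying categories, and hence by Lemma \ref{reflect} it is an equivalence of left $\GG$-modules, giving $(\ref{2c})$.

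For $(\ref{2c}) \Rightarrow (\ref{2a})$, it suffices (since both the torsor property and the hypothesis are invariant under module equivalence) to verify that the trivial left $\GG$-module is itself a torsor. There the characteristic map is $\chi: \GG \times \GG \to \GG \times \GG$, $(g,h) \mapsto (gh, h)$; the 2-group structure furnishes an objectwise quasi-inverse $(a,b) \mapsto (ab^{-1}, b)$, with the required natural isomorphisms supplied by the associator and the unit/counit of the chosen inverses.

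For $(\ref{2a}) \Rightarrow (\ref{2b})$, fix $x_0 \in \XX$ and a quasi-inverse $\psi = (\psi_1, \psi_2): \XX \times \XX \to \GG \times \XX$ to $\chi$. The natural isomorphism $\chi \circ \psi \cong \id_{\XX \times \XX}$ splits componentwise into $\psi_2 \cong \pi_2$ and $\psi_1 \lact \psi_2 \cong \pi_1$. Define $H_{x_0}: \XX \to \GG$ by $H_{x_0}(y) = \psi_1(y, x_0)$. Restricting the isomorphism $\chi \circ \psi \cong \id$ to the slice $\XX \times \{x_0\}$ yields $F_{x_0} \circ H_{x_0} \cong \id_\XX$, and restricting $\psi \circ \chi \cong \id$ to $\GG \times \{x_0\}$ yields $H_{x_0} \circ F_{x_0} \cong \id_\GG$; hence $F_{x_0}$ is an equivalence of categories, which is precisely $(\ref{2b})$.

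The main technical obstacle lies in the final step: extracting the components $\psi_1$, $\psi_2$ and then specializing the ambient 2-cells to the slice over $x_0$ requires some care, since one must invoke the pseudonaturality of $\chi$ and $\psi$ together with the action coherence to reinterpret $\psi_1 \lact \psi_2 \cong \pi_1$ at $(y, x_0)$ as an isomorphism $H_{x_0}(y) \lact x_0 \cong y$. These verifications are routine diagram chases in a bicategory, but are the only place where genuine 2-categorical coherence appears.
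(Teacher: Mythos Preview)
Your proposal is correct and follows essentially the same cyclic scheme as the paper: the steps $(\ref{2b}) \Rightarrow (\ref{2c})$ and $(\ref{2c}) \Rightarrow (\ref{2a})$ are verbatim the paper's arguments (use $F_{x_0}$ together with Lemma~\ref{reflect}, then exhibit the explicit quasi-inverse of $\chi$ on the trivial module). The only deviation is in $(\ref{2a}) \Rightarrow (\ref{2b})$: the paper checks directly that $F_{x_0}$ is essentially surjective, full, and faithful by reading off those same properties from $\chi$, whereas you instead restrict a chosen quasi-inverse $\psi$ of $\chi$ to the slice over $x_0$ to produce a quasi-inverse $H_{x_0}$ of $F_{x_0}$; both are standard and equally short, so this is a stylistic rather than a substantive difference.
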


\begin{proof}
\eqref{2a} $\Rightarrow$ \eqref{2b}:\\
Fix $x \in \XX$. We will show that the map $F: g \mapsto g \lact x$ is full, faithful, and essentially surjective.

If $x' \in \mathcal{X}$, essential surjectivity of the characteristic map implies that there exists $g \in \mathcal{G}$, $x'' \cong x$ such that $g \lact x'' \cong x'$. It follows that $g \lact x \cong g \lact x'' \cong x'$. Thus, $F$ is essentially surjective.

If $g, g' \in \mathcal{G}$, $\phi_1, \phi_2: g \to g'$, such that $\phi_1 \lact x = \phi_2 \lact x: g \lact x \to g' \lact x$, then $(\phi_1 \lact x, x) = (\phi_2 \lact x, x): (g \lact x, x) \to (g' \lact x, x)$, so faithfulness of the characteristic map implies that $(\phi_1, x) = (\phi_2, x)$, and hence $\phi_1 = \phi_2$. Thus, $F$ is faithful.

Finally, if $g, g' \in \mathcal{G}$, and $f: g \lact x \to g' \lact x$, then fullness of the characteristic map gives a map $(\phi, \id): (g, x) \to (g', x)$ such that $\phi \lact \id: g \lact x \to g' \lact x$ is $f$. Thus, $F$ is full.

\eqref{2b} $\Rightarrow$ \eqref{2c}:\\
The map $g \mapsto g \lact x$ is a map $\GG \to \XX$ of $\GG$-modules and an equivalence of categories, so by Lemma \ref{reflect}, it is an equivalence of $\GG$-modules.

\eqref{2c} $\Rightarrow$ \eqref{2a}:\\
It suffices to show that the trivial left $\mathcal{G}$-module is a left $\mathcal{G}$-torsor. The map $(a, \pi_2): \mathcal{G} \times \mathcal{G} \to \mathcal{G} \times \mathcal{G}$ has quasi-inverse $(a \circ (-^{-1}), \pi_2)$, so $\mathcal{G}$ is a left $\mathcal{G}$-torsor.
\end{proof}

\begin{cor}
\label{mortor}
Any morphism $F: \mathcal{X} \to \mathcal{Y}$ of left $\mathcal{G}$-torsors is an equivalence.
\end{cor}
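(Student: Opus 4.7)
The plan is to use Theorem \ref{2torsors} to reduce the torsor condition to essential simple transitivity, and then exploit a 2-out-of-3 property of equivalences of categories.

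First I would note that since $\XX$ is a torsor, it is nonempty, so we may choose an object $x \in \XX$. By the equivalence \eqref{2a} $\Leftrightarrow$ \eqref{2b} in Theorem \ref{2torsors}, the map $G_x: \GG \to \XX$ defined by $g \mapsto g \lact x$ is an equivalence of categories, and similarly $G_{F(x)}: \GG \to \YY$ defined by $g \mapsto g \lact F(x)$ is an equivalence of categories (using that $\YY$ is nonempty since it contains $F(x)$).

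Next I would use the fact that $F$ is a morphism of $\GG$-modules: this means $F$ is equipped with a natural isomorphism $F(g \lact x) \cong g \lact F(x)$, so we get a natural isomorphism $F \circ G_x \cong G_{F(x)}$. Since both $G_x$ and $G_{F(x)}$ are equivalences of categories, the 2-out-of-3 property for equivalences in $\Cat$ implies that $F$ itself is an equivalence of categories. Finally I would invoke Lemma \ref{reflect} to promote this to an equivalence of $\GG$-modules, which is what is meant by ``equivalence'' of $\GG$-torsors.

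The only subtle point is verifying 2-out-of-3 in the situation at hand: we have $F \circ G_x$ naturally isomorphic to an equivalence, with $G_x$ itself an equivalence, and we want to conclude $F$ is an equivalence. This is standard since equivalences are stable under natural isomorphism and composition with equivalences on either side, but one should confirm that ``equivalence of categories'' here means the usual up-to-natural-iso notion (which it does, as $\Cat$ is a 2-category). Once that is in hand, no further calculation is needed.
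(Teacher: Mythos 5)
Your proposal is correct and is essentially the paper's own argument: the paper fixes $x \in \XX$, observes that the triangle formed by $g \mapsto g \lact x$, $g \mapsto g \lact F(x)$, and $F$ commutes up to isomorphism, and concludes by Theorem \ref{2torsors} and two-out-of-three that $F$ is an equivalence. Your additional appeal to Lemma \ref{reflect} to upgrade from an equivalence of categories to an equivalence of $\GG$-modules is a reasonable (and implicitly needed) finishing touch that the paper leaves unstated.
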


\begin{proof}
Fix $x \in \XX$. Then the following diagram commutes up to isomorphism.
\[\xymatrix{
& \GG \ar[dl]_{g \mapsto g \lact x} \ar[dr]^{g \mapsto g \lact F(x)}\\
\XX \ar[rr]_{F} & & \YY
}\]
 By Theorem \ref{2torsors}, the top two arrows are equivalences, so $F$ is an equivalence.
\end{proof}

\begin{prop}
\label{auttorsor}
The autoequivalence 2-group of a left $\mathcal{G}$-torsor is equivalent to $\mathcal{G}$.
\end{prop}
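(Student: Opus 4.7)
The plan is to use Theorem \ref{2torsors} to reduce to the trivial case and then do an explicit calculation. Every left $\GG$-torsor $\XX$ is equivalent as a $\GG$-module to the trivial torsor $\GG$, so $\Aut_{\GG}(\XX) \simeq \Aut_{\GG}(\GG)$, and it suffices to identify the latter with $\GG$.

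For each $h \in \GG$, right multiplication $R_h \colon g \mapsto g \otimes h$ commutes with left multiplication up to the associators of $\GG$ and hence defines a morphism of left $\GG$-modules; by Corollary \ref{mortor} it is automatically an autoequivalence. A morphism $\phi \colon h \to h'$ in $\GG$ induces a modification $R_h \Rightarrow R_{h'}$ with components $g \lact \phi$. These assemble into a 2-functor $\Phi \colon \GG^{\rev} \to \Aut_{\GG}(\GG)$, and composing with the equivalence $\GG \simeq \GG^{\rev}$ given by inversion (available because $\GG$ is a 2-group) yields a candidate equivalence $\GG \to \Aut_{\GG}(\GG)$.

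It remains to check that $\Phi$ is an equivalence of 2-groups. Essential surjectivity follows because any left $\GG$-module morphism $F \colon \GG \to \GG$ satisfies $F(g) \cong g \lact F(e)$ by equivariance, so $F \cong R_{F(e)}$. Fully faithfulness follows because a modification $R_h \Rightarrow R_{h'}$ is a family $\alpha_g \colon g \otimes h \to g \otimes h'$ whose compatibility with the action forces $\alpha_g = g \lact \alpha_e$; hence the modification is uniquely determined by and uniquely recoverable from a single morphism $\alpha_e \colon h \to h'$ in $\GG$.

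The main obstacle, and the bulk of the actual work, is bookkeeping the coherence data. The statements ``$R_h$ commutes with the action'' and ``$\alpha$ is compatible with the action'' are really statements about pseudonatural transformations and modifications of pseudofunctors $\GG[1] \to \Cat$, involving associator isomorphisms rather than strict equalities. I expect the cleanest way to finish is to strictify: every 2-group is equivalent to a strict one, and every pseudofunctor to $\Cat$ is equivalent to a strict 2-functor, after which all the compatibilities become literal naturality squares and the argument above becomes a direct diagram chase recovering the classical fact that the $G$-equivariant automorphisms of $G$ (as a left $G$-set) are given by right multiplication.
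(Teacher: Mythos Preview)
Your proposal is correct and follows essentially the same route as the paper: reduce via Theorem~\ref{2torsors} to the trivial torsor, then identify $\Aut_{\GG}(\GG)$ with $\GG$ via right multiplication, the key computation being $F(g) \cong g \otimes F(e)$. The only cosmetic differences are that the paper packages the argument as an explicit quasi-inverse $f \mapsto f(e)^{-1}$ rather than checking fully-faithful plus essentially-surjective, and it handles coherence informally at the level of isomorphisms rather than invoking strictification.
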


\begin{proof}
By Theorem \ref{2torsors}, every $\mathcal{G}$-torsor is equivalent to the trivial left $\mathcal{G}$-torsor, so it suffices to show this for the trivial left $\mathcal{G}$-torsor $\mathcal{G}$. We get a monoidal functor $\mathcal{G} \to \Aut_{\mathcal{G}}(\mathcal{G})$ by sending $g \in \mathcal{G}$ to $x \mapsto xg^{-1}$. We get a monoidal functor $\Aut_{\mathcal{G}}(\mathcal{G}) \to \mathcal{G}$ by sending $f \in \Aut_{\mathcal{G}}(\mathcal{G})$ to $f(e)^{-1}$. The composition $\mathcal{G} \to \Aut_{\mathcal{G}}(\mathcal{G}) \to \mathcal{G}$ is isomorphic to the identity; for the other direction, we note that if $f(e)^{-1} \cong g$, then $f(x) \cong xx^{-1}f(x) \cong xf(e) \cong xg^{-1}$ is an isomorphism of $\GG$-module morphisms.
\end{proof}

\section{Bitorsors for 2-groups}

We can also define bitorsors for 2-groups much the same as we do for groups. We begin here with the notion of bimodules for monoidal categories.

\begin{defn}
Let $\mathcal{C}$ and $\mathcal{D}$ be monoidal categories. A \define{$(\mathcal{C}, \mathcal{D})$-bimodule} is a $(\mathcal{D}^{\rev} \times \mathcal{C})$-module. We think of $\mathcal{C}$ as acting on the left and $\mathcal{D}$ as acting on the right.\end{defn}

We denote by ${_\mathcal{C}}\Mod_\mathcal{D}$ the 2-category of $(\mathcal{C}, \mathcal{D})$-bimodules.

\begin{defn}
Let $\CC$, $\DD$, and $\EE$ be 2-groups, and let $\XX$ and $\YY$ be $(\CC, \DD)$- and $(\DD, \EE)$-bimodules, respectively. The \define{tensor product} $\XX \times_{\DD} \YY$ of $\XX$ and $\YY$ is defined to be the bicolimit of the diagram
\[\XX \times \DD \times \DD \times \YY \triplerightarrow \XX \times \DD \times \YY \rightrightarrows \XX \times \YY,\]
where $\CC$ acts on the left on each left factor, and $\EE$ acts on the right on each right factor. Here, the arrows correspond to the product of $\DD$ and the various actions of $\DD$ on $\XX$ and $\YY$.
\end{defn}

In other words, a $(\CC, \EE)$-bimodule map $\XX \times_{\DD} \YY \to \ZZ$ is a map $F: \XX \times \YY \to \ZZ$ equipped with a family of isomorphisms $\phi_d: F(x \ract d, y) \stackrel{\cong}{\to} F(x, d \lact y)$ natural in $d$ and compatible with the product in $\DD$. 

The following important ``folklore'' result is entirely formal, although the author does not believe a reference exists.

\begin{thm}
Let $\CC$ be a monoidal category. Then $\CC$-bimodules form a monoidal 2-category.
\end{thm}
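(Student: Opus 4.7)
The plan is to verify the monoidal structure piece by piece, exploiting universal properties of bicolimits throughout so that coherence becomes automatic, in exact analogy with the classical proof that bimodules over a ring form a monoidal category.

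First I would identify the unit: $\CC$ itself, regarded as a $(\CC,\CC)$-bimodule by left and right multiplication. The unit equivalences $\CC \times_\CC \XX \simeq \XX$ and $\XX \times_\CC \CC \simeq \XX$ follow from the universal property of the bicolimit defining $\times_\CC$. Indeed, a $(\CC,\CC)$-bimodule map out of $\CC \times_\CC \XX$ amounts to a functor $F : \CC \times \XX \to \mathcal{W}$ equipped with natural isomorphisms $F(c \ract d, x) \cong F(c, d \lact x)$ compatible with the tensor product in $\CC$ and with the outer $(\CC,\CC)$-actions; restricting to $c = e$ gives a bimodule map $\XX \to \mathcal{W}$, while conversely any such map extends uniquely (up to coherent isomorphism) via the left $\CC$-action on $\XX$. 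This exhibits the two corepresentable 2-functors as equivalent, and hence $\CC$ as a unit.

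Second I would construct the associator. For $\CC$-bimodules $\XX$, $\YY$, $\ZZ$, both $(\XX \times_\CC \YY) \times_\CC \ZZ$ and $\XX \times_\CC (\YY \times_\CC \ZZ)$ corepresent the pseudofunctor sending a $\CC$-bimodule $\mathcal{W}$ to the 2-category of functors $\XX \times \YY \times \ZZ \to \mathcal{W}$ carrying a $(\CC,\CC)$-bimodule structure on the outside together with two compatible families of balancing isomorphisms, one on the $\XX$--$\YY$ edge and one on the $\YY$--$\ZZ$ edge. By the bicategorical Yoneda lemma, this common corepresentation furnishes an equivalence, unique up to unique invertible 2-cell, which serves as the associator.

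Third, the remaining coherence data of a monoidal 2-category, namely the pentagon and triangle 2-cells and the further invertible modifications witnessing their higher coherence, are forced by these universal properties: under the corepresentations, both legs of each coherence diagram correspond to the same underlying multilinear functor equipped with the same system of balancings, so each coherence cell is canonically the identity on that universal datum, and the coherence equations reduce to tautologies. The main obstacle is purely bookkeeping. Fully unpacking ``monoidal 2-category'' yields an elaborate pile of data, including non-abelian pentagon and hexagon equations at the level of modifications, whose direct verification would be prohibitive; the value of the universal-property approach is that each individual check becomes a tautology, which is exactly why the author characterizes the result as ``entirely formal.''
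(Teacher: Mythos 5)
Your argument is essentially sound as a sketch, but it is a genuinely different route from the one taken in the paper. You build the monoidal structure directly on the 2-category of bimodules: you exhibit $\CC$ as the unit, produce the associator by showing that both iterated tensor products corepresent the same pseudofunctor of ``balanced trilinear'' maps, and then invoke the bicategorical Yoneda lemma to force the pentagonator and the remaining coherence modifications and equations. The paper instead proves the statement by transport of structure: it identifies $\CC$-bimodules with weighted-bicolimit-preserving endo-pseudofunctors of the presheaf 2-category $[(\CC[1])^{\op}, \Cat]$ (an Eilenberg--Watts-style equivalence), under which the relative tensor product becomes composition of pseudofunctors. Since composition of pseudofunctors is strictly associative and unital, the monoidal 2-category structure is obtained by transporting a \emph{strict} one across a biequivalence, and no coherence diagram ever has to be inspected. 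The trade-off is roughly this: your approach is more elementary and self-contained but places the burden on the Yoneda-style uniqueness argument, where you must verify in each instance that both legs of a coherence diagram really are classified by the \emph{same} universal datum (including, e.g., that $-\times_\CC \ZZ$ preserves the bicolimit defining $\XX \times_\CC \YY$ so that the triple tensor product corepresents what you claim); the paper's approach outsources all of that to two pieces of standard but nontrivial machinery, namely the bimodule/cocontinuous-functor equivalence and the fact that monoidal bicategory structure transports along biequivalences. Both are legitimate, and both are presented at a comparable level of rigor to the paper's admittedly ``vaguest outline.''
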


\begin{proof}
We give only the vaguest outline of a proof. We will identify $\CC$-bimodules with endo-pseudofunctors of the presheaf 2-category $[(\CC[1])^{\op}, \Cat]$ that preserve all weighted bicolimits. For the relevant bicategorical definitions, see \cite{fibrations} and \cite{fibrations-2}.

Given a pseudofunctor $\widehat{\XX}: [(\CC[1])^{\op}, \Cat] \to [(\CC[1])^{\op}, \Cat]$ preserving weighted bicolimits, we restrict via the bicategorical Yoneda embedding to get a pseudofunctor $\XX': \CC[1] \to [(\CC[1])^{\op}, \Cat]$, which corresponds under adjunction to a $\CC$-bimodule $\XX: (\CC[1])^{\op} \times \CC[1] \to \Cat$. In the other direction, given a module $\XX: (\CC[1])^{\op} \times \CC[1] \to \Cat$, we construct a weighted bicolimit-preserving pseudofunctor $\widehat{\XX}: [(\CC[1])^{\op}, \Cat] \to [(\CC[1])^{\op}, \Cat]$ by
\[\widehat{\XX}(\YY)(\ast) = \bicolim(\YY(-), \XX(\ast, -)).\]
These two constructions give quasi-inverse equivalences of 2-categories, and take the tensor product of bimodules to the composition of pseudofunctors. Since pseudofunctors form a (strict) monoidal 2-category, we may transport this structure to the 2-category of $\CC$-bimodules to get a monoidal 2-category.
\end{proof}

Next, we prove a coherence result for bimodules.

\begin{defn}
Let $\CC$ be a strict monoidal category. A \define{strict $\CC$-bimodule} is a category $\XX$ equipped with a strict 2-functor $\CC \times \CC^{\op} \to \Aut(\XX)$.
\end{defn}

\begin{defn}
Let $\CC$ be a strict monoidal category, and let $\XX$ and $\YY$ be strict $\CC$-bimodules. A \define{strict morphism} from $\XX$ to $\YY$ is a strict 2-natural transformation $\XX \Rightarrow \YY$.
\end{defn}

\begin{thm}
\label{coherence}
Let $\CC$ be a monoidal category, and let $\XX$ be a $\CC$-bimodule. Then there is a strict $\CC$-bimodule $\st \XX$ over $\st \CC$ and an equivalence $\XX \to \st \XX$ that is equivariant with respect to the strictification maps $\CC \to \st \CC$.
\end{thm}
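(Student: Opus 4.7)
The plan is to reduce this coherence statement to the general principle that any pseudofunctor from a bicategory into a strict 2-category can be strictified. By the previous theorem (and the definition of bimodule given at the start of this section), a $\CC$-bimodule is the same data as a pseudofunctor $\XX : (\CC^{\rev} \times \CC)[1] \to \Cat$, while a strict $\st \CC$-bimodule is a strict 2-functor out of $((\st \CC)^{\rev} \times \st \CC)[1]$. Strictification of monoidal categories preserves products and reversal, so the strictification biequivalence $\CC \to \st \CC$ induces a biequivalence of bicategories $(\CC^{\rev} \times \CC)[1] \to ((\st \CC)^{\rev} \times \st \CC)[1]$ that exhibits the target as a bicategorical strictification of the source.

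With that setup in hand, I would invoke the strictification theorem for pseudofunctors: for any pseudofunctor $F: \BB \to \TT$ from a bicategory $\BB$ into a strict 2-category $\TT$, there is a strict 2-functor $\st F: \st \BB \to \TT$ together with a pseudo-natural equivalence $F \simeq \st F \circ \sigma$, where $\sigma: \BB \to \st \BB$ is the strictification biequivalence. Applied to $\XX$ with $\TT = \Cat$, this produces the desired strict $\st \CC$-bimodule $\st \XX$ and the equivalence $\XX \to \st \XX$. Equivariance with respect to the strictification map $\CC \to \st \CC$ comes for free: it is precisely the pseudo-naturality of the comparison with respect to the $(\CC^{\rev} \times \CC)[1]$-action.

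The only nontrivial ingredient is the pseudofunctor strictification theorem itself, which I would cite rather than reprove in detail; this is the step where the main technical work sits. A clean route is via the bicategorical Yoneda embedding: post-composing $F$ with the (strict) Yoneda embedding of $\TT$ reduces the problem to a statement about representable presheaves, where the factorization through $\st \BB$ can be extracted by a direct unpacking of the pseudofunctor axioms. A more pedestrian alternative is to construct $\st \XX$ by hand, with objects the formal strings $(c_1, \ldots, c_n \mid x \mid d_1, \ldots, d_m)$ for $c_i, d_j \in \CC$ and $x \in \XX$, and morphisms inherited from $\XX$ through its coherence isomorphisms; the equivalence $\XX \to \st \XX$ then sends $x$ to the length-zero string $(- \mid x \mid -)$, and the proof that this is an equivalence mirrors the classical argument for the strictification of a monoidal category.
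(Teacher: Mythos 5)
Your argument is correct, but it is packaged differently from the paper's. The paper forms the two-object ``collage'' bicategory $\CC_{\XX}$ with $\End(\sA) = \End(\sB) = \CC$, $\Hom(\sA,\sB) = \XX$, $\Hom(\sB,\sA) = \varnothing$, and composition given by the actions, and then applies the Mac Lane--Par\'e coherence theorem for bicategories to strictify this single bicategory; the strict monoidal category $\st\CC$ and the strict bimodule $\st\XX$ are then read off as hom-categories of the strictification, with equivariance coming for free from the biequivalence. You instead keep the bimodule in the form it was defined in, as a pseudofunctor $(\CC^{\rev}\times\CC)[1] \to \Cat$, strictify the source (noting, correctly, that $(\st\CC)^{\rev}\times\st\CC$ serves as a strictification of $\CC^{\rev}\times\CC$ --- it need not be the canonical one, but only a monoidal biequivalence is required, and this form is exactly what the conclusion calls for), and then invoke Power-style strictification of pseudofunctors into $\Cat$, with equivariance extracted from pseudonaturality of the comparison. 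Both routes reduce the theorem to a citation of a standard coherence result, and indeed both standard results are usually proved by the same Yoneda-embedding engine, so the mathematical content is essentially shared; the collage trick has the mild advantage of producing $\st\CC$ and $\st\XX$ in one stroke without separately matching up the strictified left and right actions, while your formulation stays closer to the definition of bimodule actually used in the paper and generalizes immediately to $(\CC,\DD)$-bimodules. It is also worth noting that your ``pedestrian alternative'' --- formal strings $(c_1,\ldots,c_n \mid x \mid d_1,\ldots,d_m)$ --- is precisely what the paper's construction yields when one unwinds the strictification of $\CC_{\XX}$: the $1$-cells from $\sA$ to $\sB$ in $\st\CC_{\XX}$ are exactly such composable strings, so that remark recovers the paper's proof in explicit form.
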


\begin{proof}
We consider the bicategory $\CC_{\XX}$ defined as follows.
\begin{itemize}
\item $\CC_{\XX}$ has two objects, which we will denote $\sA$ and $\sB$.
\item $\End(\sA) = \End(\sB) = \CC$.
\item $\Hom(\sA, \sB) = \XX$, while $\Hom(\sB, \sA) = \varnothing$.
\item Composition is given by the left and right actions of $\CC$ on $\XX$.
\end{itemize}
We now appeal to the coherence theorem for bicategories (see, for example, \cite[Section 2]{maclanepare}) to get a strict 2-category $\st \CC_{\XX}$ with the same object set as $\CC_{\XX}$ and a biequivalence $F: \CC_{\XX} \to \st \CC_{\XX}$. The biequivalence $F$ induces monoidal equivalences $\End(\sA) \to \st \End(\sA) = \st \CC$, $\End(\sB) \to \st \End(\sB) = \st \CC$, and $\XX \to \st \XX$ compatible with the left and right actions.
\end{proof}

\begin{defn}
Let $\GG$ be a (strict) 2-group. A $\GG$-bimodule $\XX$ is a \define{$\GG$-bitorsor} if $\XX$ is a torsor separately with respect to both the left action and the right action.
\end{defn}

\begin{prop}
Let $\XX$ and $\YY$ be $\GG$-bitorsors. Then $\XX \times_{\GG} \YY$ is a $\GG$-bitorsor.
\end{prop}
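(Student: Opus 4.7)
The plan is to verify the left $\GG$-torsor condition; the right $\GG$-torsor condition follows by a symmetric argument. By Theorem \ref{2torsors}, it suffices to produce a left $\GG$-module equivalence from $\XX$ (a left $\GG$-torsor) to $\XX \times_{\GG} \YY$, and by Lemma \ref{reflect} I need only produce a left $\GG$-module morphism between these that is an equivalence of underlying categories.

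Fix $y_0 \in \YY$ and define $\iota: \XX \to \XX \times_{\GG} \YY$ by $x \mapsto [x, y_0]$, where $[x, y]$ denotes the image of $(x, y)$ in the bicolimit. This is manifestly a left $\GG$-module map. To construct a quasi-inverse $\pi$, I use the universal property of the bicolimit, which identifies functors out of $\XX \times_{\GG} \YY$ with $\GG$-balanced functors $\XX \times \YY \to \XX$. Since $\YY$ is a left $\GG$-torsor, Theorem \ref{2torsors} gives an equivalence of categories $\rho_{y_0}: \GG \to \YY$, $g \mapsto g \lact y_0$; pick a pseudo-inverse $\sigma: \YY \to \GG$, which automatically satisfies $\sigma(y_0) \cong e$ since $\sigma \rho_{y_0} \cong \id$ and $\rho_{y_0}(e) \cong y_0$. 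Define $\pi$ via the formula $(x, y) \mapsto x \ract \sigma(y)$. The required balancing isomorphism then reduces to the natural isomorphism $g \cdot \sigma(y) \cong \sigma(g \lact y)$, which holds because both sides name the same object under the equivalence $\rho_{y_0}$.

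The compositions are now routine: $\pi \circ \iota(x) = x \ract \sigma(y_0) \cong x$, and $\iota \circ \pi[x, y] = [x \ract \sigma(y), y_0] = [x, \sigma(y) \lact y_0] \cong [x, y]$, where the middle equality in the second chain uses the balancing built into the bicolimit. Thus $\iota$ is an equivalence of categories, and Lemma \ref{reflect} promotes it to an equivalence of left $\GG$-modules. The main technical obstacle will be checking that the balancing isomorphism underlying $\pi$ satisfies the full coherence required by the bicolimit universal property, that is, that the natural isomorphisms $g \cdot \sigma(y) \cong \sigma(g \lact y)$ assemble into a pseudonatural transformation compatible with the product on $\GG$. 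I would sidestep this bookkeeping by first invoking Theorem \ref{coherence} to replace $\GG$, $\XX$, and $\YY$ with strict models, so that the module actions and balancing become strict equalities and the cocycle compatibilities become trivial.
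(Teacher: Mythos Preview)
Your argument is correct and follows the same underlying idea as the paper: show that $\XX \times_{\GG} \YY \simeq \XX$ as a left $\GG$-module by using that $\YY$ is trivial as a left $\GG$-torsor, then invoke Theorem~\ref{2torsors}. The paper, however, phrases this more abstractly and thereby avoids the coherence bookkeeping you flag at the end. It simply writes
\[
\XX \times_{\GG} \YY \;\simeq\; \XX \times_{\GG} \GG \;\simeq\; \XX
\]
as left $\GG$-modules, where the first equivalence comes from applying the $2$-functor $\XX \times_{\GG}(-)$ to the left $\GG$-module equivalence $\GG \simeq \YY$ supplied by Theorem~\ref{2torsors}, and the second is the unit property of the tensor product. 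Your map $\iota(x)=[x,y_0]$ is exactly the composite of these two equivalences, and your explicit $\pi$ is an unpacking of its quasi-inverse. The advantage of the paper's formulation is that the balancing coherence you worry about is absorbed into the statement that $\XX \times_{\GG}(-)$ is a $2$-functor on left $\GG$-modules (hence carries equivalences to equivalences), so no strictification step is needed. Your route buys a more concrete description of the equivalence, at the cost of having to verify the coherence by hand or appeal to Theorem~\ref{coherence}.
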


\begin{proof}
By Theorem \ref{2torsors}, $\YY$ is equivalent to $\GG$ as a left $\GG$-torsor. It follows that as a left $\GG$-module, $\XX \times_{\GG} \YY \cong \XX \times_{\GG} \GG \cong \XX \cong \GG$, so $\XX \times_{\GG} \YY$ is a left $\GG$-torsor. An identical argument shows that it is a right $\GG$-torsor.
\end{proof}

We can construct nontrivial (strict) $\GG$-bitorsors as follows. Given $\Phi \in \Aut(\GG)$, we define a bitorsor $\GG_{\Phi}$ to be $\GG$ as a left $\GG$-torsor. The right action is given by $\GG^{\op} \stackrel{\;^{-1}}{\to} \GG \stackrel{\Phi}{\to} \GG \stackrel{\cong}{\to} \Aut_{\GG}(\GG)$, where the last arrow comes from Proposition \ref{auttorsor}. Explicitly, $(g, g') \in \GG \times \GG^{\op}$ acts by $x \mapsto gx\Phi(g')$.

Our next goal is to show that every $\GG$-bitorsor is equivalent to one of the above form. To do this, we introduce the \define{adjoint autoequivalence} of $\XX$.

For a given $x \in \XX$, we define a 2-group $\Gamma_{\GG, \XX, x}$ consisting of triples $(g, g', \phi)$, where $g, g' \in \GG$ and $\phi: g \lact x \stackrel{\cong}{\to} x \ract g'$. A morphism of triples $(g_1, g_1', \phi_1) \to (g_2, g_2', \phi_2)$ is a pair of morphisms $g_1 \to g_2$, $g_1' \to g_2'$ that intertwine $\phi_1$ and $\phi_2$. The tensor product on $\Gamma_{\GG, \XX, x}$ is given by $(g_1, g_1', \phi_1) \otimes (g_2, g_2', \phi_2) = (g_1 \otimes g_2, g_1' \otimes g_2', \phi_1 \bowtie \phi_2)$, where $\phi_1 \bowtie \phi_2 = (\phi_1 g_2') \circ (g_1 \phi_2)$. Theorem \ref{coherence} guarantees that this product is coherently associative.

There are two natural projections $\pi_1, \pi_2: \Gamma_{\GG, \XX, x} \to \GG$. Each of these is an equivalence, as for any $g \in \GG$, there is a unique (up to unique isomorphism) $g' \in \GG$ with $g \lact x \cong x \ract g'$, and the freedom in choosing this isomorphism is precisely the automorphism group of $g$ itself.

\begin{defn}
The \define{adjoint autoequivalence} of the pair $(\XX, x)$ is the autoequivalence of $\GG$ given by $\Ad_{\XX, x} = \pi_1 \circ \pi_2^{-1}$.
\end{defn}

\begin{prop}
\label{bitorcoh}
Let $\XX$ be a $\GG$-bitorsor, $x \in \XX$. Then $\XX \cong \GG_{\Ad_{\XX, x}}$.
\end{prop}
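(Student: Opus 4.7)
The plan is to construct an explicit equivalence $F: \GG_{\Ad_{\XX,x}} \to \XX$ and check that it is a map of bitorsors.

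Define $F: \GG \to \XX$ by $F(g) = g \lact x$. Viewing $\GG$ as the trivial left $\GG$-torsor (which is what underlies $\GG_{\Ad_{\XX,x}}$ as a left module), this $F$ is tautologically a map of left $\GG$-modules, and by Theorem~\ref{2torsors}\eqref{2b} it is an equivalence of categories. Lemma~\ref{reflect} then promotes it to an equivalence of left $\GG$-torsors.

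The main work is exhibiting $F$ as a map of \emph{right} $\GG$-modules when the right action on the source is twisted by $\Ad_{\XX,x}$. Unwinding the definition, the right action on $\GG_{\Ad_{\XX,x}}$ has $g \ract g' = g \cdot \Ad_{\XX,x}(g')$, so one needs a natural isomorphism
\[
\sigma_{g,g'}: F(g) \ract g' \;=\; (g \lact x) \ract g' \;\stackrel{\cong}{\longrightarrow}\; g \lact \bigl(\Ad_{\XX,x}(g') \lact x\bigr) \;=\; F\bigl(g \ract g'\bigr).
\]
The object $(\Ad_{\XX,x}(g'), g', \phi_{g'})$ is, by the very definition of $\Ad_{\XX,x}$ via $\Gamma_{\GG, \XX, x}$, equipped with a distinguished isomorphism $\phi_{g'}: \Ad_{\XX,x}(g') \lact x \stackrel{\cong}{\to} x \ract g'$, which is moreover natural in $g'$ and essentially unique. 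Taking $\sigma_{g,g'}$ to be $g \lact \phi_{g'}^{-1}$ composed with the associator for the two actions gives the required 2-cell.

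The remaining issue—and the one requiring the coherence setup already in place—is to verify that this $\sigma$ is compatible with the tensor product in $\GG$, i.e.\ that it endows $F$ with the structure of a (right) $\GG$-module pseudomorphism. This amounts to showing that the assignment $g' \mapsto \phi_{g'}$ is monoidal, which is precisely the content of the product $(g_1,g_1',\phi_1) \otimes (g_2,g_2',\phi_2) = (g_1 g_2, g_1'g_2', \phi_1 \bowtie \phi_2)$ in $\Gamma_{\GG,\XX,x}$: the uniqueness clause for lifts along $\pi_2$ forces $\phi_{g_1' g_2'} = \phi_{g_1'} \bowtie \phi_{g_2'}$ up to canonical iso, and Theorem~\ref{coherence} (applied to strictify the bitorsor structure) lets one treat the resulting pentagon and bimodule axioms strictly. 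With $F$ now a morphism of $\GG$-bitorsors between $\GG$-bitorsors, Corollary~\ref{mortor} (applied to either the left or the right structure) confirms it is an equivalence, completing the proof.
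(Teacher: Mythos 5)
Your proposal is correct and follows essentially the same route as the paper: define $g \mapsto g \lact x$, observe it is a left module map, supply the right-module structure via the canonical isomorphism $\Ad_{\XX,x}(g') \lact x \cong x \ract g'$ coming from $\Gamma_{\GG,\XX,x}$, and invoke Corollary~\ref{mortor} to conclude it is an equivalence. The only difference is that you spell out the monoidality/coherence checks that the paper leaves implicit.
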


\begin{proof}
We define a functor $\GG_{\Ad_{\XX, x}} \to \XX$ by $g \mapsto g \lact x$. This is a map of left $\GG$-modules by definition. The compatibility with the right action is given by the isomorphism
\begin{align*}
(g \otimes \Ad_{\XX, x}(g')) \lact x & \stackrel{\cong}{\to} g \lact (\Ad_{\XX, x}(g') \lact x)\\
& \stackrel{\cong}{\to} g \lact (x \ract g')\\
& \stackrel{\cong}{\to} (g \lact x) \ract g'.
\end{align*}
This map of bitorsors is necessarily an equivalence by Corollary \ref{mortor}.
\end{proof}

\begin{prop}
\label{bitor1}
Let $\Phi, \Psi \in \Aut(\GG)$. Then $\GG_{\Phi} \times_{\GG} \GG_{\Psi} \cong \GG_{\Psi \circ \Phi}$.
\end{prop}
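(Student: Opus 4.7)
The plan is to appeal to Proposition \ref{bitorcoh}. The preceding proposition already ensures that $\GG_\Phi \times_\GG \GG_\Psi$ is a $\GG$-bitorsor, so it is equivalent to $\GG_\Theta$ where $\Theta$ is its adjoint autoequivalence at any chosen basepoint. It therefore suffices to identify this adjoint autoequivalence with $\Psi \circ \Phi$.

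I would take the basepoint to be the image in the bicolimit of $(e, e) \in \GG_\Phi \times \GG_\Psi$. The left action evaluates to $g \lact (e, e) = (g, e)$, and the defining relation $(x \ract d, y) \cong (x, d \lact y)$ of the bicolimit, applied with $x = y = e$ and $d = \Phi^{-1}(g)$ (so that $x \ract d = e \cdot \Phi(\Phi^{-1}(g)) = g$ and $d \lact y = \Phi^{-1}(g)$), rewrites $(g, e)$ as $(e, \Phi^{-1}(g))$ in $\GG_\Phi \times_\GG \GG_\Psi$. The right action evaluates to $(e, e) \ract g' = (e, \Psi(g'))$ directly from the definition of the right $\GG$-action on $\GG_\Psi$. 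Imposing the isomorphism $g \lact (e, e) \cong (e, e) \ract g'$ therefore reduces to an isomorphism $\Phi^{-1}(g) \cong \Psi(g')$ in the $\GG_\Psi$ slot, equivalently $g \cong \Phi(\Psi(g')) = (\Psi \circ \Phi)(g')$ in the paper's composition convention. By definition, this identifies $\Ad_{\GG_\Phi \times_\GG \GG_\Psi,\, (e, e)}$ with $\Psi \circ \Phi$, and Proposition \ref{bitorcoh} then yields $\GG_\Phi \times_\GG \GG_\Psi \cong \GG_{\Psi \circ \Phi}$.

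The main delicacy is to verify that the identifications $(g, e) \cong (e, \Phi^{-1}(g))$ assemble, as $g$ varies, into a genuine monoidal autoequivalence of $\GG$ (a bona fide element of $\Aut(\GG)$) rather than a bijection on isomorphism classes alone. Theorem \ref{coherence} allows me to replace $\GG$ together with both bitorsors by strict models in which the bicolimit relations hold as equalities on the nose. After this strictification, the functoriality in $g$ and the compatibility with the tensor product of $\GG$ reduce to routine checks using that $\Phi$ is a strict group automorphism and the naturality of the coherence 2-cells built into the universal property of the bicolimit.
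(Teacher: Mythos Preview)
Your argument is correct but proceeds by a different route than the paper. The paper writes down an explicit map $\GG_\Phi \times \GG_\Psi \to \GG_{\Psi \circ \Phi}$, namely $(g,g') \mapsto g \otimes \Phi(g')$, checks that it is a $\GG$-bimodule map balanced over the middle $\GG$-action (so it descends to the tensor product), and then invokes Corollary~\ref{mortor} to conclude it is an equivalence. You instead appeal to the classification in Proposition~\ref{bitorcoh}: since every bitorsor is $\GG_\Theta$ for $\Theta$ its adjoint autoequivalence, you compute $\Theta$ at the basepoint $(e,e)$ by tracing through the bicolimit relations. The paper's approach is shorter and more constructive, and it sidesteps the coherence issue you raise in your final paragraph. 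Your approach is more conceptual and shows Proposition~\ref{bitorcoh} doing real work, at the cost of the extra verification that the pointwise identification $g \cong \Phi(\Psi(g'))$ upgrades to a monoidal isomorphism; a clean way to finish this, rather than appealing to strictification, is to observe that $g' \mapsto (\Phi\Psi(g'),\,g',\,\phi_{g'})$ defines a monoidal section of $\pi_2: \Gamma_{\GG,\XX,(e,e)} \to \GG$, so that $\Ad = \pi_1 \circ \pi_2^{-1}$ is represented by $g' \mapsto \Phi(\Psi(g'))$ on the nose. Your remark about the composition convention is also on target: the paper's own map forces $(\Psi \circ \Phi)(h) = \Phi(\Psi(h))$ for the right action to match, so the order is diagrammatic here.
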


\begin{proof}
We define a map $\GG \times \GG \to \GG$ by $(g, g') \mapsto g \otimes \Phi(g')$. This map extends to a $\GG$-bimodule map $\GG_\Phi \times \GG_\Psi \to \GG_{\Psi \circ \Phi}$ and descends to a $\GG$-bitorsor map $\GG_\Phi \times_{\GG} \GG_{\Psi} \to \GG_{\Psi \circ \Phi}$. This map is an equivalence by Corollary \ref{mortor}. 
\end{proof}

\begin{defn}
Let $\XX$ be a $(\GG, \HH)$-bimodule. The \define{opposite bimodule} $\XX^{-1}$ is the $(\HH, \GG)$-bimodule with the same objects as $\XX$ but where the action factors through the isomorphism $(-)^{-1}: \HH \times \GG^{\rev} \stackrel{\cong}{\to} \GG \times \HH^{\rev}$.
\end{defn}

\begin{prop}
\label{bitor2}
If $\Phi \in \Aut(\GG)$, then $(\GG_{\Phi})^{-1} \cong \GG_{\Phi^{-1}}$.
\end{prop}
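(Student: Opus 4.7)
The plan is to reduce the statement to Corollary \ref{mortor}: both $(\GG_{\Phi})^{-1}$ and $\GG_{\Phi^{-1}}$ are $\GG$-bitorsors, so any $\GG$-bimodule map between them is automatically an equivalence, and the whole task becomes producing an explicit such map. A secondary simplification is to first invoke Theorem \ref{coherence} and pass to strict models, so that I can manipulate group elements freely without bookkeeping associators.

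Next I would unpack the two bimodule structures concretely. By the definition of the opposite bimodule, the $(\GG,\GG)$-action on $(\GG_{\Phi})^{-1}$ is obtained from the $\GG_{\Phi}$-action by precomposing with the swap-and-invert isomorphism $\GG \times \GG^{\rev} \stackrel{\cong}{\to} \GG \times \GG^{\rev}$; writing this out gives left action $h \lact x = x \Phi(h)^{-1}$ and right action $x \ract g = g^{-1} x$. On $\GG_{\Phi^{-1}}$, the actions are simply $h \lact x = h x$ and $x \ract g = x \Phi^{-1}(g)$.

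I would then propose the candidate functor
\[
F \colon (\GG_{\Phi})^{-1} \longrightarrow \GG_{\Phi^{-1}}, \qquad F(x) = \Phi^{-1}(x^{-1}).
\]
A short verification, using only that $\Phi$ is a monoidal autoequivalence and that inversion is an anti-automorphism, shows $F(h \lact x) = h \cdot F(x)$ and $F(x \ract g) = F(x) \cdot \Phi^{-1}(g)$, so $F$ is a map of $\GG$-bimodules; Corollary \ref{mortor} then promotes it to an equivalence of bitorsors. There is no real obstacle here; the only point requiring care is matching the inversions when unpacking $(\GG_{\Phi})^{-1}$, which is exactly why strictifying at the outset pays off.
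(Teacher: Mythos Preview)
Your proposal is correct and follows essentially the same approach as the paper: write down an explicit $\GG$-bimodule map between the two bitorsors built from the autoequivalence $\Phi$, and then invoke Corollary~\ref{mortor} to conclude it is an equivalence. The paper's proof is a one-liner stating that $\Phi:\GG\to\GG$ itself extends to a bitorsor equivalence $\GG_{\Phi^{-1}}\to(\GG_{\Phi})^{-1}$; your map $x\mapsto\Phi^{-1}(x^{-1})$ in the other direction is (up to the paper's terseness) its inverse, and your careful unpacking of the two bimodule structures and the equivariance check simply fills in the details the paper omits.
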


\begin{proof}
The map $\Phi: \GG \to \GG$ extends to an equivalence $\GG_{\Phi^{-1}} \to (\GG_{\Phi})^{-1}$ of $\GG$-bitorsors.
\end{proof}
\begin{prop}
Let $\GG$ be a 2-group, and let $\XX$ be a $\GG$-bitorsor. Then $\XX \times_{\GG} \XX^{-1} \cong \GG \cong \XX^{-1} \times_{\GG} \XX$
\end{prop}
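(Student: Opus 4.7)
The plan is to reduce everything to the classification of $\GG$-bitorsors in terms of elements of $\Aut(\GG)$ which has already been established, and then compute. Concretely, picking any object $x \in \XX$, Proposition \ref{bitorcoh} gives an equivalence $\XX \cong \GG_{\Phi}$ of $\GG$-bitorsors, where $\Phi = \Ad_{\XX, x} \in \Aut(\GG)$. This is the key step: once $\XX$ is put in this normal form, the rest of the argument is a bookkeeping exercise using the two ``arithmetic'' propositions already proved.

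Next, I apply Proposition \ref{bitor2} to $\Phi$ to get $\XX^{-1} \cong (\GG_\Phi)^{-1} \cong \GG_{\Phi^{-1}}$. Combining with Proposition \ref{bitor1}, I obtain
\[\XX \times_{\GG} \XX^{-1} \cong \GG_\Phi \times_{\GG} \GG_{\Phi^{-1}} \cong \GG_{\Phi^{-1} \circ \Phi} = \GG_{\id},\]
and symmetrically,
\[\XX^{-1} \times_{\GG} \XX \cong \GG_{\Phi^{-1}} \times_{\GG} \GG_{\Phi} \cong \GG_{\Phi \circ \Phi^{-1}} = \GG_{\id}.\]
It therefore suffices to identify $\GG_{\id}$ with $\GG$ as a $\GG$-bitorsor. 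By construction, $\GG_{\id}$ has underlying left $\GG$-torsor $\GG$ (left multiplication) and right action $(g, g') \mapsto g \otimes \id(g') = g \otimes g'$ through the canonical identification of $\GG$ with $\Aut_{\GG}(\GG)$ from Proposition \ref{auttorsor}. This is precisely the standard bimodule structure on $\GG$.

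I expect no serious obstacle: each step is immediate from a previously stated result. The only mild subtlety is to verify that the equivalence $\GG_{\id} \cong \GG$ intertwines both actions simultaneously, rather than merely the left or the right; but this is built into the definition of $\GG_{\Phi}$ and the compatibility of the identification $\GG \cong \Aut_{\GG}(\GG)$ with the right-multiplication action of $\GG$ on itself. Once this identification is in place, the two displayed equivalences give the desired conclusion.
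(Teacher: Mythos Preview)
Your proof is correct and follows exactly the same approach as the paper: the paper's proof simply says the result follows immediately from Proposition~\ref{bitorcoh}, Proposition~\ref{bitor1}, and Proposition~\ref{bitor2}, and you have spelled out precisely how these three results combine. The additional remark identifying $\GG_{\id}$ with the trivial bitorsor $\GG$ is a harmless clarification that the paper leaves implicit.
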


\begin{proof}
This result follows immediately from Corollary \ref{bitorcoh}, Proposition \ref{bitor1}, and Proposition \ref{bitor2}.
\end{proof}

\begin{prop}
\label{admor}
Let $\GG$ be a 2-group, and let $\XX$ be a $\GG$-bitorsor. Then $\Ad: \XX \to \Aut^{\otimes}(\GG)$, $x \mapsto \Ad_{\XX, x}$ is a morphism of $\GG$-bimodules, where $\GG$ acts on $\Aut^{\otimes}(\GG)$ via left and right multiplication by inner autoequivalences.
\end{prop}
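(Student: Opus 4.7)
The plan is to verify two things: first, that $\Ad$ extends to a functor $\XX \to \Aut^{\otimes}(\GG)$; second, that this functor carries coherent equivariance data for the left and right actions of $\GG$.

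For functoriality, I will exploit the fact that the auxiliary 2-group $\Gamma_{\GG, \XX, x}$ depends functorially on $x$. A morphism $f \colon x \to y$ in $\XX$ induces a morphism of 2-groups $\Gamma_{\GG, \XX, x} \to \Gamma_{\GG, \XX, y}$ sending $(g, g', \phi)$ to $(g, g', (f \ract g') \circ \phi \circ (g \lact f^{-1}))$. Since this intertwines the projections $\pi_1$ and $\pi_2$ that define $\Ad_{\XX,x}$ and $\Ad_{\XX,y}$, it produces a natural isomorphism $\Ad_{\XX,x} \Rightarrow \Ad_{\XX,y}$ in $\Aut^{\otimes}(\GG)$; the compositional and unital properties are then routine.

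For equivariance, I recall that the $\GG$-bimodule structure on $\Aut^{\otimes}(\GG)$ is pulled back along the tensor functor $\GG \to \Aut^{\otimes}(\GG)$ sending $h$ to the inner autoequivalence $k \mapsto h k h^{-1}$. I need to produce, for each $h, h', x$, a natural isomorphism between $\Ad_{\XX, h \lact x \ract h'}$ and the autoequivalence $k \mapsto h \cdot \Ad_{\XX,x}(h' k h'^{-1}) \cdot h^{-1}$. Existence follows from the chain of equivalences
\begin{align*}
  \bigl(h \cdot \Ad_{\XX,x}(h' k h'^{-1}) \cdot h^{-1}\bigr) \lact (h \lact x \ract h')
  & \cong h \lact \Ad_{\XX,x}(h' k h'^{-1}) \lact x \ract h' \\
  & \cong h \lact (x \ract h' k h'^{-1}) \ract h' \\
  & \cong (h \lact x \ract h') \ract k,
\end{align*}
combined with the uniqueness (up to unique isomorphism) of the adjoint autoequivalence for $h \lact x \ract h'$.

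The main obstacle will be the coherence bookkeeping: namely, verifying that the natural isomorphisms above respect the associators and unit constraints on both sides, so that $\Ad$ is a bona fide morphism of $\GG$-bimodules and not merely a functor compatible with the underlying actions. This is where Theorem \ref{coherence} pays off: by strictifying $\XX$ as a $\st\GG$-bimodule (and observing that the corresponding bimodule structure on $\Aut^{\otimes}(\st\GG)$ can likewise be taken strict), the coherence check reduces to a routine diagram chase.
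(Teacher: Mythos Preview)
Your argument is correct and follows the same underlying idea as the paper: both verify equivariance by showing that translating the basepoint by $g$ (on either side) conjugates the adjoint autoequivalence by the inner autoequivalence attached to $g$. The paper packages this by writing down explicit equivalences of the auxiliary 2-groups, $\Gamma_{\GG,\XX,g\lact x}\to\Gamma_{\GG,\XX,x}$ and $\Gamma_{\GG,\XX,x\ract g}\to\Gamma_{\GG,\XX,x}$, and reading the conjugation off from the resulting commuting triangles with $\pi_1,\pi_2$; you instead compute directly in $\XX$ and invoke the characterizing property of $\Ad_{\XX,y}$ (essential uniqueness of $g$ with a witnessed isomorphism $g\lact y\cong y\ract k$). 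These are equivalent formulations of the same step.

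Two minor differences worth noting. First, you treat the left and right actions simultaneously via $h\lact x\ract h'$, whereas the paper handles them separately; either is fine. Second, you address functoriality in $x$ and the coherence of the equivariance isomorphisms explicitly (the latter via strictification from Theorem~\ref{coherence}), while the paper's proof records only the construction of the equivariance isomorphisms and leaves those checks implicit. In that sense your write-up is a bit more thorough, but there is no substantive divergence in strategy.
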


\begin{proof}
We must construct natural isomorphisms $\Ad_{\XX, g \lact x}(h) \stackrel{\cong}{\to} g^{-1} \Ad_{\XX, x}(h) g$ and $\Ad_{\XX, x \ract g}(h) \stackrel{\cong}{\to} \Ad_{\XX, x}(g^{-1}hg)$.

For the former, we construct an equivalence $\Gamma_{\GG, \XX, g \lact x} \to \Gamma_{\GG, \XX, x}$ by $(h, h', \phi) \mapsto (g^{-1} h g, h', g^{-1} \phi)$. This equivalence fits into the following commutative diagram, which gives the desired isomorphism of functors.
\[\xymatrix{
& \Gamma_{\GG, \XX, g \lact x} \ar[dl]_{\cong} \ar[d]_{\cong} \ar[ddr]^{\cong}\\
\GG \ar[d]_{h \mapsto ghg^{-1}} & \Gamma_{\GG, \XX, x} \ar[dl]^{\cong} \ar[dr]_{\cong}\\
\GG & & \GG
}\]

The latter isomorphism is constructed similarly. We construct an equivalence $\Gamma_{\GG, \XX, x \ract g} \to \GG_{\GG, \XX, x}$ by $(h, h', \phi) \mapsto (h, gh'g^{-1}, \phi g^{-1})$. This equivalence fits into the following commutative diagram, which gives the desired isomorphism of functors.
\[\xymatrix{
& \Gamma_{\GG, \XX, x \ract g} \ar[ddl]_{\cong} \ar[d]_{\cong} \ar[dr]^{\cong}\\
& \Gamma_{\GG, \XX, x} \ar[dl]^{\cong} \ar[dr]_{\cong} & \GG \ar[d]^{h \mapsto g^{-1}hg}\\
\GG & & \GG
}\]
\end{proof}

\section{Bitorsors and Braided Bitorsors for Braided 2-Groups}

For general groups $G$, the adjoint automorphism associated with a $G$-bitorsor $X$ is not unique; changing the basepoint $x \in X$ will modify the automorphism by an inner automorphism. If $G$ is abelian, however, there are no nontrivial inner automorphisms, and the adjoint automorphism is unique. A similar phenomenon holds in the 2-group setting; in particular, braided 2-groups are equipped with a canonical trivialization of any inner automorphism, and so the adjoint autoequivalence is unique.

\begin{prop}
Let $\BB$ be a braided 2-group, $\XX$ a $\BB$-bitorsor, and $x \in \XX$. Then the autoequivalence $\Ad_{\XX, x}$ is independent of the choice of $x$ up to unique isomorphism.
\end{prop}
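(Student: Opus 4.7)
The plan is to use that in a braided 2-group $\BB$, every inner autoequivalence carries a canonical trivialization coming from the braiding. For $b \in \BB$, the braiding $c_{b,-}$ yields a natural isomorphism
\[
\beta_b(a) : b \otimes a \otimes b^{-1} \xrightarrow{c_{b,a} \otimes b^{-1}} a \otimes b \otimes b^{-1} \xrightarrow{\cong} a,
\]
exhibiting conjugation by $b$ as canonically isomorphic to the identity autoequivalence of $\BB$, and these $\beta_b$ are compatible with the tensor product in $\BB$.

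First, I would combine this trivialization with Proposition \ref{admor} to produce, for each $b \in \BB$ and each $x \in \XX$, a canonical natural isomorphism $\Ad_{\XX, b \lact x} \cong \Ad_{\XX, x}$: compose the bimodule structure isomorphism $\Ad_{\XX, b \lact x}(h) \cong b^{-1} \Ad_{\XX, x}(h) b$ of Proposition \ref{admor} with $\beta_{b^{-1}}$ applied to $\Ad_{\XX, x}(h)$. The right-action case is handled symmetrically. Since the left action of $\BB$ on $\XX$ is essentially simply transitive by Theorem \ref{2torsors}(\ref{2b}), any $x' \in \XX$ is isomorphic to $b \lact x$ for some $b \in \BB$, so this yields an isomorphism $\Ad_{\XX, x'} \cong \Ad_{\XX, x}$.

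The main obstacle is showing that this isomorphism is canonical, i.e., independent of the choice of witness $(b, \alpha : b \lact x \cong x')$. A second witness $(b', \alpha' : b' \lact x \cong x')$ is connected to the first by the unique morphism $b \to b'$ coming from the equivalence $\BB \to \XX$, $b \mapsto b \lact x$ of Theorem \ref{2torsors}. Naturality of the braiding (which controls $\beta$) together with naturality of the bimodule structure map from Proposition \ref{admor} should force the two resulting isomorphisms $\Ad_{\XX, x'} \cong \Ad_{\XX, x}$ to agree. A slicker packaging would be to assemble $\Ad$ together with $\beta$ into a functor from $\XX$ to the 2-groupoid of autoequivalences of $\BB$ equipped with a chosen isomorphism to a fixed $\Phi_0 = \Ad_{\XX, x_0}$; this 2-groupoid is contractible, so such a functor is essentially constant, and composing back with the forgetful map yields the desired independence of $x$ up to unique isomorphism.
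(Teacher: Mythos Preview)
Your proposal is correct and follows essentially the same approach as the paper: both use the braiding to canonically trivialize the inner-autoequivalence action of $\BB$ on $\Aut^{\otimes}(\BB)$ and then combine this with the bimodule morphism $\Ad$ of Proposition~\ref{admor} to conclude that $\Ad_{\XX,x}$ is basepoint-independent up to unique isomorphism. The paper compresses this into two sentences (invoking only the trivialization of the \emph{right} action and leaving the uniqueness implicit), whereas you spell out the construction of $\beta_b$, the use of essential simple transitivity from Theorem~\ref{2torsors}, and the canonicity argument; your ``contractible 2-groupoid'' packaging is a nice way to make the paper's implicit uniqueness claim precise.
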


\begin{proof}
The braiding on $\BB$ provides a trivialization of the right action of $\BB$ on $\Aut^{\otimes}(\BB)$ given in Proposition \ref{admor}. Thus, the image of $x \in \XX$ under $\Ad$ is independent of $x$ up to unique isomorphism.
\end{proof}

We will abuse notation and denote by $\Ad_{\XX}$ the autoequivalence $\Ad_{\XX, x}$. We denote by ${_\BB}\TT_{\BB}$ the 3-group of $\BB$-bitorsors, and by $\Aut^{\otimes}(\BB)$ the 2-group of (not necessarily braided) autoequivalences of $\BB$.

\begin{cor}
The assignment $\XX \mapsto \Ad_{\XX}$ defines a monoidal pseudofunctor $\Ad: {_\BB}\TT_{\BB} \to \Aut^{\otimes}(\BB)$.
\end{cor}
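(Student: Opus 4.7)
The plan is to leverage the two structural results already in hand: Proposition \ref{bitorcoh}, which shows every $\BB$-bitorsor $\XX$ is equivalent to a standard bitorsor $\BB_{\Ad_\XX}$, and Proposition \ref{bitor1}, which identifies the tensor product of standard bitorsors with composition of the corresponding autoequivalences. Combined with the uniqueness of $\Ad_\XX$ up to unique isomorphism established in the preceding proposition, these should yield both the pseudofunctoriality and the monoidal constraint essentially for free.

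First I would define $\Ad$ on 1-morphisms. Given a morphism $F: \XX \to \YY$ of $\BB$-bitorsors and a basepoint $x \in \XX$, pushforward along $F$ produces a 2-group morphism $F_*: \Gamma_{\BB, \XX, x} \to \Gamma_{\BB, \YY, F(x)}$ by $(g, g', \phi) \mapsto (g, g', F(\phi))$, which strictly intertwines both projections to $\BB$. This induces an isomorphism $\Ad_{\XX, x} \cong \Ad_{\YY, F(x)}$ in $\Aut^\otimes(\BB)$; composing with the canonical isomorphisms supplied by the preceding proposition yields a well-defined $\Ad(F): \Ad_\XX \to \Ad_\YY$. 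Functoriality and the action on 2-morphisms follow from the analogous naturality of the assignment $F \mapsto F_*$.

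For the monoidal constraint, I would combine Propositions \ref{bitorcoh} and \ref{bitor1} into the chain of equivalences of bitorsors
\[\XX \times_\BB \YY \;\cong\; \BB_{\Ad_\XX} \times_\BB \BB_{\Ad_\YY} \;\cong\; \BB_{\Ad_\YY \circ \Ad_\XX},\]
from which the uniqueness of the adjoint autoequivalence extracts a canonical isomorphism $\Ad_{\XX \times_\BB \YY} \cong \Ad_\YY \circ \Ad_\XX$. The unit constraint $\Ad_\BB \cong \id_\BB$ is immediate from the definition of the standard bitorsor structure on $\BB$.

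The main obstacle is verifying the monoidal pseudofunctor coherence axioms (the pentagon for the associator constraint and the triangles for the unit constraints). I expect to discharge this cheaply: the uniqueness of $\Ad_\XX$ up to \emph{unique} isomorphism forces any two parallel 2-cells between compositions of adjoint autoequivalences to agree, so every coherence diagram in $\Aut^\otimes(\BB)$ collapses to a tautology. Equivalently, one can transport all of the structural data through the equivalences $\XX \simeq \BB_{\Ad_\XX}$ and reduce the verification to the strict case of standard bitorsors, where Proposition \ref{bitor1} holds on the nose and the pseudofunctor structure is witnessed by identities.
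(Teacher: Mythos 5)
The paper offers no proof of this corollary at all---it is presented as an immediate consequence of the preceding proposition (basepoint-independence of $\Ad_{\XX,x}$) together with Propositions \ref{admor}, \ref{bitorcoh}, and \ref{bitor1}---so your elaboration follows exactly the route the paper intends: $\Ad$ on 1- and 2-morphisms via pushforward of the categories $\Gamma_{\BB,\XX,x}$, and the monoidal constraint via $\XX \times_\BB \YY \cong \BB_{\Ad_\XX} \times_\BB \BB_{\Ad_\YY} \cong \BB_{\Ad_\YY \circ \Ad_\XX}$. (Two small points: the pushforward is not literally $(g,g',\phi)\mapsto(g,g',F(\phi))$, since one must insert the bimodule structure cells of $F$ to convert $F(g\lact x)\to F(x\ract g')$ into a map $g\lact F(x)\to F(x)\ract g'$; and your formula correctly records that $\Ad$ reverses the order of composition, which is how the monoidal structure on the target must be read.)

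There is, however, a genuine gap in your coherence step. It is false that ``any two parallel 2-cells between compositions of adjoint autoequivalences agree'': $\Aut^{\otimes}(\BB)$ is a 2-group with nontrivial automorphisms of objects in general---in the skeletal model of Section 7, the automorphisms of $(\phi,\psi,k)$ are the normalized 1-cochains $\eta\colon A\to H$ with $d\eta=0$, i.e.\ the generally nonzero group $\Hom(A,H)$. The ``unique isomorphism'' in the preceding proposition refers only to the distinguished compatible family of isomorphisms $\Ad_{\XX,x}\cong\Ad_{\XX,x'}$ singled out by Proposition \ref{admor} and the braiding; it does not make hom-sets singletons, so the pentagon and triangle diagrams do not collapse to tautologies. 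Your fallback---transporting everything to standard bitorsors where Proposition \ref{bitor1} holds strictly---is the right repair in outline, but as stated it begs the question: choosing the equivalences $\XX\simeq\BB_{\Ad_\XX}$ \emph{compatibly with tensor products} is precisely the coherence being claimed. What actually closes the argument is that the constraint 2-cell $\Ad_{\XX\times_\BB\YY}\cong\Ad_{\YY}\circ\Ad_{\XX}$ is induced by the universal property of the bicolimit defining $\times_\BB$ applied to a chosen pair of basepoints, so that the two composites around the pentagon are both characterized by their restriction to the basepoint $(x,y,z)$, where they agree by construction; this uniqueness-from-a-universal-property argument (not uniqueness of arbitrary 2-cells) is the missing ingredient.
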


Let $A = \pi_0(\BB)$. Then there is a monoidal pseudofunctor $\pi_0: {_\BB}\TT_{\BB} \to {_A}\TT_A$.

\begin{thm}
The maps $\Ad: {_\BB}\TT_{\BB} \to \Aut^{\otimes}(\BB)$ and $\pi_0: {_\BB}\TT_{\BB} \to {_A}\TT_A$ identify $\Aut^{\otimes}(\BB) \times_{\Aut A} {_A}\TT_A$ with the monoidal 1-truncation ${_\BB}\TT_{\BB}^{\leq 1}$ of ${_\BB}\TT_{\BB}$.
\end{thm}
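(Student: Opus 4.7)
The plan is to combine $\Ad$ and $\pi_0$ into a single monoidal pseudofunctor $F : {_\BB}\TT_\BB \to \Aut^\otimes(\BB) \times_{\Aut A} {_A}\TT_A$, show it descends to the 1-truncation, and verify that the descended map is an equivalence of 2-groups by checking bijectivity on $\pi_0$ and on $\pi_1$.

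For the construction, one observes that applying $\pi_0$ to $\Ad_\XX$ recovers the twist of the $A$-bitorsor $\pi_0(\XX)$; this provides the 2-cell needed so that $(\Ad, \pi_0)$ assembles into a pseudofunctor into the 2-pullback. Since that pullback is already a 2-group, $F$ factors through the desired $F^{\leq 1}: {_\BB}\TT_\BB^{\leq 1} \to \Aut^\otimes(\BB) \times_{\Aut A} {_A}\TT_A$.

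Bijectivity on $\pi_0$ follows from Proposition \ref{bitorcoh}, which says every $\BB$-bitorsor has the form $\BB_\Phi$, together with the uniqueness of the adjoint autoequivalence in the braided case established at the start of Section 4; these yield $\pi_0({_\BB}\TT_\BB^{\leq 1}) \cong \pi_0(\Aut^\otimes(\BB))$. On the target side, since $A$ is an ordinary abelian group, an $A$-bitorsor is determined up to isomorphism by its twist, so $\pi_0$ of the fiber product also reduces to $\pi_0(\Aut^\otimes(\BB))$, and the induced map is a bijection.

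For $\pi_1$, since both sides are 2-groups it suffices to work at the basepoint given by the trivial bitorsor $\BB$. On the source side, a bitorsor autoequivalence of $\BB$ is a left-$\BB$-module autoequivalence (classified, by Proposition \ref{auttorsor}, by an object of $\BB$) equipped with a right-module structure. The braiding on $\BB$ provides a canonical such extension, and any other differs from it by a monoidal natural automorphism of $\id_\BB$, i.e., an element of $\Hom(A, H)$. This yields $\pi_1({_\BB}\TT_\BB^{\leq 1}) \cong A \times \Hom(A, H)$. On the target side, because $\Aut A$ is 1-truncated, the $\pi_1$ of the fiber product is simply the product $\pi_1({_A}\TT_A) \times \pi_1(\Aut^\otimes(\BB)) = A \times \Hom(A, H)$. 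The main obstacle is this last identification: one must carefully parameterize right-module extensions of a given left-module autoequivalence and match them with monoidal natural automorphisms of $\id_\BB$ via the braiding, verifying that this bijection arises canonically and respects the monoidal structure.
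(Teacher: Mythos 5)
Your proposal is correct and is essentially the paper's own (one-sentence) argument made explicit: both rest on the observation that an autoequivalence of a $\BB$-bitorsor is determined up to isomorphism by the image of the unit object (equivalently, an automorphism of the underlying $A$-bitorsor) together with the right-module/compatibility data, which amounts to an automorphism of the adjoint autoequivalence. The only cosmetic difference is that you package this as a $\pi_0$/$\pi_1$ check at the basepoint (legitimate for a monoidal functor of 2-groups), whereas the paper phrases the morphism count at an arbitrary object $\BB_{\Phi}$.
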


\begin{proof}
It suffices to note that specifying an autoequivalence of $\BB_{\Phi}$ is determined, up to isomorphism, by a choice of $x \in \BB_{\Phi}$ (the image of the unit object $e$), or equivalently, an automorphism of the underlying $A$-bitorsor, and an automorphism of $\Phi$.
\end{proof}

\begin{defn}
A $\BB$-bitorsor $\XX$ is \define{braided} if the autoequivalence $\Ad_{\XX}$ is braided. We denote by ${_\BB}{\widetilde{\TT}}_{\BB}$ the full sub-3-group of the 3-group ${_\BB}{\TT}_{\BB}$ consisting of braided bitorsors, and by $\Aut^{\widetilde{\otimes}}(\BB)$ the 2-group of braided autoequivalences of $\BB$.
\end{defn}

\begin{cor}
\label{bratrunc}
The maps $\Ad: {_\BB}{\widetilde{\TT}}_{\BB} \to \Aut^{\widetilde{\otimes}}(\BB)$ and $\pi_0: {_\BB}{\widetilde{\TT}}_{\BB} \to {_A}\TT_A$ identify $\Aut^{\widetilde{\otimes}}(\BB) \times_{\Aut A} {_A}\TT_A$ with the monoidal 1-truncation ${_\BB}{\widetilde{\TT}}_{\BB}^{\leq 1}$ of ${_\BB}{\widetilde{\TT}}_{\BB}$.
\end{cor}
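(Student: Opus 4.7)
The plan is to deduce this corollary directly from the preceding theorem by restriction to braided objects. By the definition preceding the corollary, ${_\BB}\widetilde{\TT}_\BB$ is the \emph{full} sub-3-group of ${_\BB}\TT_\BB$ whose objects are those bitorsors $\XX$ for which $\Ad_\XX$ is a braided autoequivalence of $\BB$. In particular, an object $\XX$ of ${_\BB}\TT_\BB$ lies in ${_\BB}\widetilde{\TT}_\BB$ if and only if its image under $\Ad$ lies in the full sub-2-group $\Aut^{\widetilde{\otimes}}(\BB) \subseteq \Aut^{\otimes}(\BB)$.

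First I would apply the preceding theorem to obtain an equivalence of monoidal 2-categories
\[{_\BB}\TT_\BB^{\leq 1} \simeq \Aut^{\otimes}(\BB) \times_{\Aut A} {_A}\TT_A,\]
realized by the pair of monoidal pseudofunctors $(\Ad, \pi_0)$. Since passage to 1-truncation does not alter the object set and since ${_\BB}\widetilde{\TT}_\BB \hookrightarrow {_\BB}\TT_\BB$ is full at every level, the monoidal 1-truncation ${_\BB}\widetilde{\TT}_\BB^{\leq 1}$ is simply the full monoidal sub-2-category of ${_\BB}\TT_\BB^{\leq 1}$ spanned by those $\XX$ with $\Ad_\XX \in \Aut^{\widetilde{\otimes}}(\BB)$. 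I would then transport this characterization across the equivalence above: on the right-hand side the condition cuts out precisely the objects $(\Phi, Y)$ of $\Aut^{\otimes}(\BB) \times_{\Aut A} {_A}\TT_A$ with $\Phi \in \Aut^{\widetilde{\otimes}}(\BB)$, which is the pullback $\Aut^{\widetilde{\otimes}}(\BB) \times_{\Aut A} {_A}\TT_A$.

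The only substantive point to verify — and what I regard as the main (though minor) obstacle — is that restricting to full sub-2-groups on both sides is compatible with the fiber-product description, i.e., that the pullback of $(\Aut^{\otimes}(\BB) \times_{\Aut A} {_A}\TT_A)$ along $\Aut^{\widetilde{\otimes}}(\BB) \hookrightarrow \Aut^{\otimes}(\BB)$ yields $\Aut^{\widetilde{\otimes}}(\BB) \times_{\Aut A} {_A}\TT_A$. This is immediate from the universal property of bicategorical pullbacks together with the fact that the inclusion $\Aut^{\widetilde{\otimes}}(\BB) \hookrightarrow \Aut^{\otimes}(\BB)$ is fully faithful, so that no constraints on morphisms beyond those inherited from ${_\BB}\TT_\BB^{\leq 1}$ are introduced. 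Combining these observations yields the claimed identification.
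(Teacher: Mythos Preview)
Your proposal is correct and matches the paper's approach: the corollary is stated without proof precisely because it follows from the preceding theorem by restricting to the full sub-3-group of braided bitorsors, exactly as you argue. You have simply spelled out the (routine) verification that fullness of the inclusions on both sides makes this restriction compatible with the fiber product, which the paper leaves implicit.
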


\begin{prop}
\label{bratoract}
Let $\BB$ be a braided 2-group with $\pi_1(\BB) = H$. Then $\pi_2({_\BB}{\widetilde{\TT}}_{\BB}) \cong H$, and the action of $\pi_0({_\BB}{\widetilde{\TT}}_{\BB})$ on $\pi_2$ factors through the natural map $\pi_0({_\BB}{\widetilde{\TT}}_{\BB}) \to \Aut(H)$.
\end{prop}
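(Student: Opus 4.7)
The plan is to compute $\pi_2({_\BB}\widetilde{\TT}_\BB)$ directly as the group of 2-automorphisms of the identity bitorsor equivalence on the unit object $\BB$, and then to calculate the conjugation action of $\pi_0$ by passing to the explicit model $\BB_\Phi$ provided by Proposition \ref{bitorcoh}. The two claims are essentially independent: the first uses only the braided structure of $\BB$, while the second uses the rigidity of $\BB$-bitorsors developed in Section 3.

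For the identification $\pi_2 \cong H$, I would unpack the definition: an element is a natural transformation $\alpha \colon \id_\BB \Rightarrow \id_\BB$ of the identity $\BB$-bimodule functor from the trivial bitorsor to itself. Bimodule compatibility forces $\alpha_{b \otimes b'} = \id_b \otimes \alpha_{b'}$ on the left and $\alpha_{b \otimes b'} = \alpha_b \otimes \id_{b'}$ on the right. Setting $b' = e$ in the first and $b = e$ in the second and applying the unit constraints shows that $\alpha$ is determined by $\alpha_e \in \Aut_\BB(e) = H$, via the two formulas $\alpha_b = \id_b \otimes \alpha_e$ (transported through $\rho_b$) and $\alpha_b = \alpha_e \otimes \id_b$ (transported through $\lambda_b$); well-definedness requires these to agree. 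In a braided 2-group this is automatic: naturality of the braiding $\sigma_{e,b}$ combined with the coherence identity $\rho_b \circ \sigma_{e,b} = \lambda_b$ yields $\id_b \otimes \alpha_e = \alpha_e \otimes \id_b$ in $\Hom_\BB(b,b)$ after transport. Thus $\alpha \mapsto \alpha_e$ is a bijection $\pi_2 \cong H$.

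For the $\pi_0$-action, recall that in any 3-group the action on $\pi_2$ is by conjugation: given $[\XX] \in \pi_0$ and $\alpha \in \pi_2$, one chooses an equivalence $\XX^{-1} \times_\BB \XX \cong \BB$ and forms $\id_{\XX^{-1}} \otimes_\BB \alpha \otimes_\BB \id_\XX$. Using Proposition \ref{bitorcoh}, I would replace $\XX$ by $\BB_\Phi$ with $\Phi = \Ad_\XX$, and use the canonical equivalence $\BB_{\Phi^{-1}} \times_\BB \BB_\Phi \cong \BB$ supplied by Propositions \ref{bitor1} and \ref{bitor2}. Tracing the twisted right $\BB$-action on $\BB_\Phi$ through these identifications, the conjugated transformation has value $\Phi(\alpha_e)$ at $e$, so the action of $[\XX]$ on $H \cong \pi_2$ is the restriction $\Ad_\XX|_H$. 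Since $\Ad_\XX$ is braided, this restriction is a group automorphism of $H$, and the assignment $[\XX] \mapsto \Ad_\XX|_H$ is precisely the natural composite $\pi_0({_\BB}\widetilde{\TT}_\BB) \xrightarrow{\Ad} \pi_0(\Aut^{\widetilde{\otimes}}(\BB)) \to \Aut(H)$. The main obstacle is the explicit bookkeeping for this conjugation through the iterated tensor products over $\BB$; everything else is routine unwinding of definitions and the braided coherence needed for the centrality of $H$.
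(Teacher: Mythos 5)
Your proposal is correct and follows essentially the same route as the paper, whose own proof is a two-sentence citation: the identification $\pi_2 \cong H$ via automorphisms of the identity on the trivial bitorsor (the paper invokes Proposition \ref{auttorsor} plus the braiding, you unwind it by hand), and the $\pi_0$-action computed by conjugating through the model $\BB_\Phi$ using Proposition \ref{bitor1}. Your write-up is just a detailed unwinding of the same argument.
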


\begin{proof}
The first part follows from Proposition \ref{auttorsor} and the existence of the braiding. The second part follows from Proposition \ref{bitor1}.
\end{proof}

\section{Extensions and bitorsors}

Having developed the theory of braided bitorsors, we now turn to the problem of lifting group extensions to 2-group extensions. We start with a review of group extensions.

In \cite[Expos\'e VII]{sga7}, Grothendieck describes extensions of groups in terms of $G$-bitorsors. We summarize his results as follows.

\begin{thm}[Grothendieck]
\label{grothenext}
Let $G$ and $K$ be groups. Isomorphism classes of extensions of $G$ by $K$ are in bijection with isomorphism classes of monoidal functors from the discrete monoidal category $G$ to the 2-group ${_K}\TT_K$ of $K$-bitorsors. If $K = A$ is abelian, the action of $G$ on $A$ in such an extension is given by the composite map $G \to {_A}\TT_A \stackrel{\Ad}{\to} \Aut(A)$.
\end{thm}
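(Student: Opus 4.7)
The plan is to describe two explicit constructions in each direction and then argue that they are mutually inverse on isomorphism classes.

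Given an extension $\partial: E \onto G$ with kernel $K$, I would define a monoidal functor $F_E: G \to {_K}\TT_K$ by sending $g \in G$ to the fiber $E_g := \partial^{-1}(g)$, with $K$-bimodule structure coming from left and right multiplication in $E$. Surjectivity of $\partial$ makes each $E_g$ nonempty, and both actions are simply transitive because $K$ is the kernel, so each $E_g$ is a $K$-bitorsor. Group multiplication restricts to a $K$-balanced map $E_g \times E_{g'} \to E_{gg'}$, which descends to a $K$-bimodule map $E_g \times_K E_{g'} \to E_{gg'}$; this is an equivalence by Corollary \ref{mortor}. Associativity of multiplication in $E$ directly supplies the coherence isomorphisms $\mu_{g,g',g''}$ making $F_E$ a monoidal functor.

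Conversely, given a monoidal functor $F: G \to {_K}\TT_K$ with coherence data $\mu_{g,g'}: F(g) \times_K F(g') \stackrel{\cong}{\to} F(gg')$ and unit isomorphism $F(e) \cong K$, I would set $E_F := \coprod_{g \in G} F(g)$ and define multiplication of $x \in F(g)$, $y \in F(g')$ as the image of $(x,y)$ under $F(g) \times F(g') \onto F(g) \times_K F(g') \stackrel{\mu_{g,g'}}{\to} F(gg')$. The associativity pentagon for $\mu$ translates exactly into associativity of this multiplication, and the unit coherence identifies $F(e)$ with $K$ as a subgroup. The projection sending each $F(g)$ to $g$ gives the surjection onto $G$, with kernel $K$, so $E_F$ is an extension of $G$ by $K$.

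I would then verify that $E \mapsto F_E$ and $F \mapsto E_F$ are quasi-inverse: starting from $E$, the set $\coprod_g (E_g)$ is canonically $E$, with matching multiplication; starting from $F$, the fibers of $E_F \to G$ reproduce the bitorsors $F(g)$ with their given bimodule structure. Isomorphisms between extensions correspond precisely to monoidal natural isomorphisms between the associated functors, since both amount to $K$-bibimodule isomorphisms $E_g \cong E'_g$ compatible with multiplication.

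For the abelian case $K = A$, I would note that the conjugation action of a lift $x \in E_g$ on $a \in A$ is defined by $xax^{-1} \in A$; unwinding this in terms of the bitorsor $E_g$, the equation $xa = \psi(a)x$ is exactly the data defining $\Ad_{E_g, x} = \psi$, so the composite $G \to {_A}\TT_A \stackrel{\Ad}{\to} \Aut(A)$ recovers the conjugation action. The main obstacle is the bookkeeping that matches the pentagon coherence for $\mu$ with associativity of the group law; but since $G$ is discrete and $K$ is an ordinary group (so all bitorsor equivalences are unique isomorphisms of sets), the coherence data are discrete and this amounts to a straightforward verification rather than a genuine 2-categorical calculation.
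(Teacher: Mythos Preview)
Your proposal is correct and follows exactly the correspondence the paper describes: the paper does not give a proof but merely cites Grothendieck (SGA~7, Expos\'e~VII) and records that the bijection sends an extension $1 \to K \to E \stackrel{\partial}{\to} G \to 1$ to the functor $g \mapsto E_g = \partial^{-1}(g)$. Your construction of the inverse via $E_F = \coprod_g F(g)$ with multiplication induced by the monoidal coherence data, and your identification of the conjugation action with $\Ad$, are the standard details behind this citation; one small wording slip is that associativity in $E$ gives the \emph{pentagon} for the $\mu_{g,g'}$ (not the $\mu$ themselves, which come from the multiplication map), and your remark that ``all bitorsor equivalences are unique isomorphisms'' is not quite right (automorphisms of a $K$-bitorsor form a torsor for $Z(K)$), but the relevant point---that ${_K}\TT_K$ is a 1-category so the coherence conditions are equations rather than 2-cells---is correct.
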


Grothendieck's correspondence is given by assigning to an extension $1 \to K \to E \stackrel{\partial}{\to} G \to 1$ the functor $g \mapsto E_g = \partial^{-1}(g)$.

\begin{defn}
Let $G$ be a group, and let $\KK$ be a 2-group. An \define{extension} of $G$ by $\KK$ is a 2-group $\EE$ equipped with a surjective monoidal functor $\partial: \EE \to G$ and an identification of $\KK$ with $\partial^{-1}(e)$ as 2-groups.
\end{defn}

We have the following straightforward analogue of Grothendieck's theorem in this setting.

\begin{thm}
Let $G$ be a group, and let $\KK$ be a 2-group. Equivalence classes of extensions of $G$ by $\KK$ are in bijection with equivalence classes of monoidal pseudofunctors from the discrete monoidal category $G$ to the 3-group ${_\KK}\TT_{\KK}$.
\end{thm}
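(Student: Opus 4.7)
My plan is to mimic Grothendieck's construction (Theorem \ref{grothenext}), with fibers of the extension playing the role of $\KK$-bitorsors.

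In the forward direction, given an extension $\partial: \EE \onto G$ with kernel $\KK$, I would send $g \in G$ to the fiber $\EE_g := \partial^{-1}(g)$, viewed as a $\KK$-bimodule via left and right multiplication in $\EE$. Each $\EE_g$ is a $\KK$-bitorsor: surjectivity of $\partial$ produces some $x \in \EE_g$, and since $x$ is invertible in $\EE$, multiplication by $x$ is simultaneously a map of left (resp.\ right) $\KK$-modules and an equivalence of categories. Lemma \ref{reflect} then promotes it to an equivalence of modules, so Theorem \ref{2torsors} applies. The multiplication of $\EE$ restricts to $\KK$-bimodule functors $\EE_g \times \EE_h \to \EE_{gh}$ which, by the universal property of the bicolimit, factor through coherence equivalences $\mu_{g,h}: \EE_g \times_\KK \EE_h \to \EE_{gh}$; the associator of $\EE$ supplies the associativity 2-cells making the pentagon commute. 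Equivalent extensions plainly yield equivalent pseudofunctors.

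In the reverse direction, given a monoidal pseudofunctor $F: G \to {_\KK}\TT_\KK$, I would assemble $\EE_F := \coprod_{g \in G} F(g)$ into a 2-group by defining, for $x \in F(g)$ and $y \in F(h)$, the tensor product $x \otimes y := \mu_{g,h}(x, y) \in F(gh)$, where $\mu_{g,h}: F(g) \times_\KK F(h) \to F(gh)$ is the coherence equivalence and $(x, y)$ is regarded as an object of the bicolimit. The $\KK$-bitorsor structure on each $F(g)$ makes this compatible with the $\KK$-action on $\EE_F$ coming from $F(e) \simeq \KK$; the associator for $F$ becomes the associator for $\EE_F$, and the pentagon for $F$ becomes the pentagon for $\EE_F$. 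Invertibility of $x \in F(g)$ follows from the equivalence $F(g) \times_\KK F(g^{-1}) \simeq F(e) \simeq \KK$, which produces a two-sided inverse in $F(g^{-1})$. The projection $\EE_F \onto G$ is then a strict monoidal surjection with kernel $F(e) \simeq \KK$.

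Finally I would verify these two constructions are quasi-inverse on equivalence classes: from $\EE$, the reassembled $\coprod_g \EE_g$ is canonically equivalent to $\EE$ via the fiber inclusions, and from $F$, the fibers of $\EE_F \onto G$ recover the $F(g)$ with matching coherence data. The main obstacle is managing the bicategorical coherence in the reverse direction, since the formula $x \otimes y := \mu_{g,h}(x, y)$ involves representatives for objects of a bicolimit and the associator of $\EE_F$ must be honestly associative up to a pentagon. This is handled cleanly by applying Theorem \ref{coherence} to strictify each $F(g)$ as a $\KK$-bimodule before assembly; once strictified, the proof becomes essentially formal, as the discreteness of $G$ eliminates all 2-cells in the source and reduces the pseudofunctor data to the associators $\mu_{g,h}$ together with their pentagon.
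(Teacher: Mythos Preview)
Your proposal is correct and is precisely the approach the paper has in mind: the paper does not supply a proof, merely labeling the statement a ``straightforward analogue'' of Grothendieck's Theorem~\ref{grothenext} and recalling that Grothendieck's correspondence sends an extension to the fiber assignment $g \mapsto \partial^{-1}(g)$. Your sketch fills in exactly this analogue, and your invocations of Lemma~\ref{reflect}, Theorem~\ref{2torsors}, Corollary~\ref{mortor}, and Theorem~\ref{coherence} are the right tools at the right places.
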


The analogue of an abelian group in our setting is a braided 2-group. The notion of an extension of a group by a braided 2-group was defined in \cite{dgno} as follows.

\begin{defn}
Let $G$ be a group, and let $\BB$ be a braided 2-group. Suppose we are given an action $a: G \to \Aut^{\widetilde{\otimes}}(\BB)$ of $G$ as braided autoequivalences of $\BB$. An \define{extension} of $G$ by $\BB$ with action $a$ is an underlying $2$-group extension $\EE$ equipped with an isomorphism between $a \circ \partial: \EE \to \Aut^{\otimes}(\BB)$ and the adjoint action of $\EE$ on $\BB \cong \partial^{-1}(e)$ which restricts to the trivialization of the adjoint action of $\BB$ on itself given by the braiding.
\end{defn}

We recall the following equivalent characterization of such extensions.

\begin{prop}[{\cite[Proposition E.10]{dgno}}]
Let $1 \to \BB \to \EE \to G \to 1$ be an ordinary extension of 2-groups with $\BB$ braided. This extension can be given the structure of a braided extension if and only if each $\Ad_x: \BB \to \BB$ is braided for $x \in \EE$, and such a structure is unique up to unique isomorphism.
\end{prop}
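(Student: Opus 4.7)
My plan is to separate the claim into three parts—the \emph{only if} direction, the \emph{if} direction, and uniqueness—and handle each in turn. The \emph{only if} direction is nearly immediate: given a braided extension structure consisting of a braided action $a: G \to \Aut^{\widetilde{\otimes}}(\BB)$ together with an isomorphism $\alpha: a \circ \partial \cong \Ad$, the component of $\alpha$ at any $x \in \EE$ exhibits $\Ad_x$ as isomorphic in $\Aut^{\otimes}(\BB)$ to the braided autoequivalence $a(\partial(x))$. Since $\Aut^{\widetilde{\otimes}}(\BB)$ is a full sub-2-group of $\Aut^{\otimes}(\BB)$, being braided is a property preserved under isomorphism, so $\Ad_x$ is itself braided.

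For the \emph{if} direction, I would exploit the canonical trivialization of the adjoint action of $\BB$ on itself provided by the braiding. For each $b \in \BB$ the braiding supplies a natural isomorphism $b \otimes c \otimes b^{-1} \cong c$ (obtained by braiding $b$ past $c$ and then cancelling $b \otimes b^{-1}$), and these assemble into an isomorphism between $\Ad|_\BB: \BB \to \Aut^{\otimes}(\BB)$ and the constant functor at $\id_\BB$, landing inside $\Aut^{\widetilde{\otimes}}(\BB)$. Under the hypothesis, $\Ad$ lifts to a pseudofunctor $\widetilde{\Ad}: \EE \to \Aut^{\widetilde{\otimes}}(\BB)$; combined with its vanishing on $\BB$, this lift descends along the quotient $\partial: \EE \to G$ to produce the desired braided action $a: G \to \Aut^{\widetilde{\otimes}}(\BB)$ together with $\alpha: a \circ \partial \cong \Ad$ whose restriction to $\BB$ is the braiding trivialization.

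For uniqueness, two braided structures $(a_1, \alpha_1)$ and $(a_2, \alpha_2)$ on the same underlying extension yield an isomorphism $\alpha_2 \circ \alpha_1^{-1}: a_1 \circ \partial \cong a_2 \circ \partial$ which, by hypothesis, restricts to the identity on $\BB$ (since both $\alpha_i$ realize the canonical braiding trivialization). Descent along $\partial$ then produces a unique isomorphism $a_1 \cong a_2$ intertwining the $\alpha_i$, which is the required uniqueness up to unique isomorphism.

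The main obstacle I anticipate is making the descent step in the \emph{if} direction fully precise at the 3-group level: one must show that a pseudofunctor $\EE \to \Aut^{\widetilde{\otimes}}(\BB)$ equipped with a specified trivialization of its restriction to the normal sub-2-group $\BB$ descends coherently to $G = \EE/\BB$. Concretely this reduces to verifying that the braiding-induced isomorphisms $\Ad_b \cong \id$ are natural in $b$, multiplicative in $b$, and compatible with the way arbitrary elements of $\EE$ conjugate elements of $\BB$, which is a hexagon-axiom coherence check on the braiding of $\BB$.
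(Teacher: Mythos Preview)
The paper does not supply its own proof of this proposition; it is simply quoted from \cite[Proposition E.10]{dgno} and used as a black box. So there is no argument in the paper against which to compare your attempt.

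Evaluated on its own, your three-part outline is correct in spirit and identifies the right ingredients. The \emph{only if} direction is indeed immediate from fullness of $\Aut^{\widetilde{\otimes}}(\BB) \subset \Aut^{\otimes}(\BB)$. For the \emph{if} direction your strategy---lift $\Ad$ to $\Aut^{\widetilde{\otimes}}(\BB)$ using the hypothesis, then descend along $\partial$ using the braiding-trivialization of $\Ad|_{\BB}$---is exactly what one does, and you have correctly located the nontrivial content in the descent step. The uniqueness argument via $\alpha_2 \circ \alpha_1^{-1}$ is also the right mechanism.

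One point worth sharpening: the descent you invoke is not quite ``a pseudofunctor trivial on a normal sub-2-group descends to the quotient'' in the abstract, because $G$ here is discrete. What you actually need is that for each $g \in G$ the functor $\Ad_x$ is independent of the choice of lift $x \in \partial^{-1}(g)$ up to the \emph{specific} isomorphism furnished by the braiding, and that these isomorphisms are compatible with composition. This is precisely the content of your final paragraph (naturality, multiplicativity, and equivariance of the braiding trivialization), so you have the right checklist; just be aware that the coherence verification, while routine, genuinely uses both hexagon axioms.
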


This proposition implies that the fibers of a braided extension are precisely braided $\BB$-bitorsors, so we can characterize braided extensions as follows.

\begin{cor}
Equivalence classes of braided extensions of $G$ by $\BB$ are in bijection with equivalence classes of monoidal pseudofunctors $G \to {_\BB}\widetilde{\TT}_{\BB}$.
\end{cor}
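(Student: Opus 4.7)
The plan is to deduce this corollary directly from the preceding theorem (which handles extensions by an arbitrary 2-group $\KK$) together with the quoted Proposition~E.10 of \cite{dgno}. The preceding theorem gives a bijection between equivalence classes of extensions of $G$ by the underlying 2-group of $\BB$ and equivalence classes of monoidal pseudofunctors $F: G \to {_\BB}\TT_\BB$, where $F(g) = \partial^{-1}(g)$. So I only need to identify, inside the set of $F$'s, those corresponding to extensions that admit (uniquely) the extra braided structure.

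The key translation is that $F(g) = \partial^{-1}(g)$ is a braided $\BB$-bitorsor, in the sense of the definition preceding Corollary~\ref{bratrunc}, if and only if $\Ad_{\partial^{-1}(g)}$ is a braided autoequivalence of $\BB$. Picking any $x \in \partial^{-1}(g)$, we have $\Ad_{\partial^{-1}(g),x} = \Ad_x|_\BB$, where $\Ad_x$ denotes conjugation in $\EE$. Hence the pseudofunctor $F$ lands in the full sub-3-group ${_\BB}\widetilde{\TT}_\BB$ precisely when $\Ad_x: \BB \to \BB$ is braided for every (equivalently, some) $x \in \EE$.

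By the cited Proposition~E.10 of \cite{dgno}, this latter condition is exactly the condition that the underlying 2-group extension admits a braided extension structure, and moreover this structure is unique up to unique isomorphism when it exists. Therefore, passing to equivalence classes, the bijection of the preceding theorem restricts to a bijection between equivalence classes of braided extensions of $G$ by $\BB$ and equivalence classes of monoidal pseudofunctors $G \to {_\BB}\widetilde{\TT}_\BB$.

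I expect no serious obstacle: all the content is already in the two ingredients cited. The only mild subtlety is tracking that ``equivalence of braided extensions'' matches ``equivalence of the underlying pseudofunctors,'' which follows from the uniqueness-up-to-unique-isomorphism clause of Proposition~E.10: any equivalence of the underlying 2-group extensions automatically intertwines the unique braided structures, so the forgetful map from braided-extension equivalences to underlying equivalences is a bijection on equivalence classes.
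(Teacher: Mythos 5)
Your argument is correct and matches the paper's (unstated, one-sentence) justification: the paper likewise derives the corollary by combining the preceding theorem for general 2-group extensions with Proposition E.10, observing that the fibers of a braided extension are precisely braided $\BB$-bitorsors. Your additional care about matching equivalence classes via the uniqueness clause of Proposition E.10 is a reasonable elaboration of the same route.
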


This problem of classifying braided extensions can be naturally broken up into smaller problems. First, we can limit ourselves to classifying extensions with a fixed action $a: G\to \Aut^{\widetilde{\otimes}}(\BB)$. Secondly, since every extension of 2-groups has an underlying group extension, we can study those braided extensions living above a fixed group extension. Of course, these two restrictions must be compatible: both actions of $G$ on $\BB$ and extensions of $G$ by $A = \pi_0(\BB)$ have an underlying automorphism of $A$, and these must agree. Thus, specifying these two restrictions is the same as specifying a monoidal functor from $G$ to $\Aut^{\widetilde{\otimes}}(\BB) \times_{\Aut A} {_A}\TT_A$. By Corollary \ref{bratrunc}, this is the same as giving a monoidal functor from $G$ to ${_\BB}\widetilde{\TT}_{\BB}^{\leq 1}$. Lifting the data of a group extension and an action of $G$ on $\BB$ to an extension of $G$ by $\BB$ is thus equivalent to lifting a monoidal functor $G \to {_\BB}\widetilde{\TT}_{\BB}^{\leq 1}$ to a monoidal pseudofunctor $G \to {_\BB}\widetilde{\TT}_{\BB}$. We will describe a cohomological obstruction to such a lifting in the next section.

\section{Obstructions to lifting extensions}
\label{obstruction}

Grothendieck's homotopy hypothesis, first formulated in the manuscript \cite{champs}, argues that $n$-groupoids should be ``the same as'' homotopy $n$-types. One way to make this statement a bit more precise is to say that, for any reasonable notion of $n$-groupoid, there should be a simplicial nerve functor that takes $n$-groupoids to Kan complexes whose homotopy groups vanish above level $n$ (which we will henceforth refer to simply as ``$n$-types''), and this nerve functor should be quasi-inverse to the fundamental $n$-groupoid functor.

In our current setting, we can make this precise as follows. We may ``deloop'' all of our groups, 2-groups, and 3-groups and functors between them to obtain one-object groupoids, 2-groupoids, and 3-groupoids. We will again denote by $\CC[1]$ the delooping of a group (or 2-group or 3-group) $\CC$. It is shown in \cite{berger} that there is a simplicial nerve functor for 3-groupoids. We will henceforth abuse notation and consider the deloopings $\CC[1]$ themselves as $n$-groupoids and Kan complexes interchangeably. We note that $\pi_i(\CC[1]) = \pi_{i - 1}(\CC)$.

The nerve of a braided 2-group $\BB$ is a 2-fold loop space, so we can talk not only about the delooping $\BB[1]$ but also the double delooping $\BB[2]$. Abelian groups, being infinite loop spaces, can be delooped arbitrarily many times; we will denote by $A[n]$ the $n$th delooping of an abelian group $A$ (i.e., the Eilenberg-Mac Lane space $K(A, n)$.)

As a warm-up, we will give a cohomological interpretation of Grothendieck's Theorem \ref{grothenext}. Classifying extensions of $G$ by $A$ with specified action $G[1] \to \Aut(A)[1]$  is equivalent, by Grothendieck's result, to classifying lifts as in the following diagram.
\[\xymatrix{
& {_A}\TT_A[1]\ar[d] \\
G[1]\ar@{-->}[ur] \ar[r] & \Aut(A)[1]
}\]

We note that ${_A}\TT_A[1]$ is a very special space: it is a 2-type with $\pi_1 = \Aut A$ and $\pi_2 = A$, and the action of $\Aut A$ on $A$ is the usual one. Thus, it is the space $\widehat{K}(A, 2)$ that classifies arbitrary (i.e., not necessarily simple) fibrations with fiber $K(A, 1)$. In particular, there is a universal fibration \[K(A, 1) \to P \to \widehat{K}(A, 2)\] (which was constructed in \cite{robinson} and generalizes the path-space fibration over $K(A, 2)$) from which every fibration with fiber $K(A, 1)$ is a pullback. Similarly, the spaces $\widehat{K}(A, n)$ admit universal fibrations with fiber $K(A, n - 1)$. In analogy with our notation of $A[n]$ for $K(A, n)$, we will write $\widehat{A}[n]$ for $\widehat{K}(A, n)$.

So roughly speaking, $\widehat{A}[2]$ is the classifying space for $A$-bitorsors, and we can view an extension of $G$ by $A$ as a family of $A$-bitorsors on $G[1]$, which comes via pullback from $\widehat{A}[2]$. Given an action $G[1] \to \Aut(A)[1]$, homotopy classes of lifts to $G[1] \to \widehat{A}[2]$ are precisely elements of $H^2(G, A)$ with the given action, so we recover the cohomological classification of group extensions.

We now return to the setting of braided 2-groups. We again let $\BB$ be a braided 2-group with $\pi_0(\BB) = A$ and $\pi_1(\BB) = H$. In analogy with the case of groups, we denote by $\widehat{\BB}[2]$ the (delooping of) the 3-group of \emph{braided} $\BB$-bitorsors. By Corollary \ref{bratrunc}, the following is a homotopy pullback square.

\[\xymatrix{
\widehat{\BB}[2]^{\leq 2} \ar[r] \ar[d] & \widehat{A}[2] \ar[d]\\
\Aut^{\widetilde{\otimes}}(\BB)[1] \ar[r] & \Aut (A)[1]
}\]

The data of an extension of $G$ by $A$ and a braided action of $G$ on $\BB$ with compatible underlying actions of $G$ on $A$ is thus given by a homotopy class of maps $G[1] \to \widehat{\BB}[2]^{\leq 2}$. We wish to know when this lifts to a map $G[1] \to \widehat{\BB}[2]$. Since $\widehat{\BB}[2]$ is a 3-type, the truncation map $\widehat{\BB}[2] \to \widehat{\BB}[2]^{\leq 2}$ is a fibration with fiber $\pi_3(\widehat{\BB}[2])[3] = H[3]$ by Proposition \ref{bratoract}. This fibration is a pullback from the universal fibration over $\widehat{H}[4]$. Thus, the obstruction to lifting an extension lies in $H^4(G, H)$, where the action of $G$ on $H$ is the specified one by Proposition \ref{bratoract}.

We will call the map $H^2(G, A) \to H^4(G, H)$ that assigns to a class $[\omega] \in H^2(G, A)$ the corresponding obstruction in $H^4(G, H)$ the \define{Pontryagin square}, and we will denote it by $\Pontr_{\BB, a}([\omega])$, where $a: G \to \Aut^{\widetilde{\otimes}}(\BB)$ is the action of $G$ on $\BB$. We have thus proved the following classification result.

\begin{thm}
Let $G$ be a group, and let $\BB$ be a braided 2-group. Suppose we are given an action $a: G \to \Aut^{\widetilde{\otimes}}(\BB)$ and an extension $E$ of $G$ by $\pi_0(\BB) = A$ such that the underlying actions of $G$ on $A$ coming from $a$ and $E$ agree. This data lifts to an extension of $G$ by $\BB$ if and only if $\Pontr_{\BB, a}([\omega]) = 0$, and if this obstruction vanishes, such extensions form a torsor for $H^3(G, H)$.
\end{thm}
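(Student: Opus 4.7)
The plan is to organize the homotopy-theoretic discussion preceding the theorem into a standard Postnikov obstruction argument applied to $\widehat{\BB}[2]$; essentially all of the nontrivial content has been accumulated in earlier sections, and what remains is to package it correctly.

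\emph{Reformulation as a lifting problem.} By the corollary following Proposition E.10 of \cite{dgno}, equivalence classes of braided extensions of $G$ by $\BB$ are in bijection with homotopy classes of maps $G[1] \to \widehat{\BB}[2]$. By Corollary \ref{bratrunc} together with the homotopy pullback square displayed in Section \ref{obstruction}, giving the pair $(a, E)$ with compatible underlying actions on $A$ is equivalent to specifying a homotopy class of maps $f: G[1] \to \widehat{\BB}[2]^{\leq 2}$; the class $[\omega] \in H^2(G, A)$ of the group extension $E$ is recovered as the composite $G[1] \to \widehat{\BB}[2]^{\leq 2} \to \widehat{A}[2]$. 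Producing a braided extension of $G$ by $\BB$ restricting to $(a, E)$ is therefore precisely lifting $f$ through the truncation map $\pi: \widehat{\BB}[2] \to \widehat{\BB}[2]^{\leq 2}$.

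\emph{Obstruction theory on the Postnikov fibration.} Since $\widehat{\BB}[2]$ is a 3-type whose only homotopy above degree 2 is $\pi_3 = H$ by Proposition \ref{bratoract}, the map $\pi$ is a principal fibration with fiber $H[3]$, pulled back from the universal fibration over $\widehat{H}[4]$ along a $k$-invariant $\kappa: \widehat{\BB}[2]^{\leq 2} \to \widehat{H}[4]$. The obstruction to lifting $[f]$ through $\pi$ is the homotopy class of $\kappa \circ f$ in $[G[1], \widehat{H}[4]] = H^4(G, H)$, which is the Pontryagin square $\Pontr_{\BB, a}([\omega])$ by definition. When this obstruction vanishes, the standard difference construction in a principal fibration exhibits the set of homotopy classes of lifts as a torsor under $[G[1], H[3]] = H^3(G, H)$: any two lifts differ by a map into the fiber $H[3]$, and any such map acts by fiberwise translation on a given lift.

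\emph{Local coefficients and the main subtlety.} The point requiring the most care is verifying that the local coefficient systems used in $H^4(G, H)$ and $H^3(G, H)$ are the ones obtained from the specified $G$-action on $H$ coming from $a$. This is the content of Proposition \ref{bratoract}, which (after delooping) says that the $\pi_1$-action on $\pi_3$ in $\widehat{\BB}[2]$ factors through the natural map $\pi_0({_\BB}\widetilde{\TT}_{\BB}) \to \Aut(H)$. Pulling back along $f$, this action becomes precisely the $G$-action on $H$ induced by $a$, so the twisted cohomology groups $H^n(G, H)$ appearing above are computed with the stated local coefficients, completing the proof.
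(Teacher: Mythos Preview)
Your proposal is correct and follows essentially the same approach as the paper: the paper's ``proof'' is precisely the homotopy-theoretic discussion in Section~\ref{obstruction} preceding the theorem statement, which identifies the data $(a,E)$ with a map $G[1]\to\widehat{\BB}[2]^{\leq 2}$ via Corollary~\ref{bratrunc}, invokes Proposition~\ref{bratoract} to identify the fiber of the truncation map as $H[3]$ with the correct $G$-action, and then appeals to the universal fibration over $\widehat{H}[4]$ for the obstruction. You have simply organized this discussion into a cleaner Postnikov-obstruction argument and made the torsor statement for $H^3(G,H)$ explicit via the standard difference construction, which the paper leaves implicit.
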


When the action of $G$ on $\BB$ is trivial (corresponding to the case of \emph{central extensions}), we have the following diagram, where all squares are (homotopy) pullbacks.

\[\xymatrix{
H[3] \ar[d] \ar@{=}[r] & H[3] \ar[d]\\
\BB[2] \ar[d] \ar[r] & \widehat{\BB}[2] \ar[d]\\
A[2] \ar[d] \ar[r] & \widehat{\BB}[2]^{\leq 2} \ar[d] \ar[r] & \widehat{A}[2] \ar[d]\\
\ast \ar[r] &  \Aut^{\widetilde{\otimes}}(\BB)[1] \ar[r] &  \Aut(A)[1]}\]

From this diagram it follows that the fibration of $H[3]$ over $A[2]$ defining $\BB[2]$ pulls back from $H[4]$ via the map we have defined. Thus, our definition of Pontryagin square reduces to the classical Pontryagin square $A[2] \to H[4]$ corresponding to the quadratic map $A \to H$ coming from the braided 2-group $\BB$ (i.e., the Whitehead half-square map $\pi_2(\BB[2]) \to \pi_3(\BB[2])$).

\section{Cochain-level description of the Pontryagin square}

In the previous section, we defined the Pontryagin square as a ``parametrized cohomology operation'' $\Aut^{\widetilde{\otimes}}(\BB)[1] \times_{\Aut(A)[1]} \widehat{A}[2] \to \widehat{H}[4]$. In order to give an explicit description of this operation in terms of group cochains, we need first to understand the structure of $\Aut^{\widetilde{\otimes}}(\BB)$.

We recall (see \cite{joyalstreet-draft} or \cite[Section 3]{joyalstreet} for definitions and details) that the data of a braided 2-group with $\pi_0 = A$ and $\pi_1 = H$ may be presented skeletally as an abelian 3-cocycle $(A, H, h, c)$; two such abelian 3-cocycles describe the same braided 2-group if and only if their difference is an abelian 3-coboundary.

\begin{prop}
Let $\BB$ be the braided 2-group corresponding to the abelian 3-cocycle $(A, H, h, c)$. Then the 2-group $\Aut^{\widetilde{\otimes}}(\BB)$ can be described as follows.
\begin{itemize}
\item An object consists of a triple $(\phi, \psi, k)$, where $\phi \in \Aut(A)$, $\psi \in \Aut(H)$, and $k: A \times A \to H$ is a normalized 2-cochain such that
\begin{align}
\label{coch}
dk &= \psi \circ h - h \circ \phi^3\text{, and}\\
\label{coch2} \psi \circ c + k &= k \circ \tau + c \circ \phi^2,
\end{align}
where $\tau: A\times A \to A \times A$ switches the two factors.
\item A morphism $(\phi, \psi, k) \to (\phi', \psi', k')$ consists of a normalized 1-cochain $\eta: A \to H$ such that $d\eta = k - k'$. Composition of morphisms is given by addition of 1-cochains.
\item The tensor product on objects is given by \[(\phi, \psi, k) \circ (\phi', \psi', k') = (\phi \circ \phi', \psi \circ \psi', k \circ (\phi')^2 + \psi \circ k').\]
This product is strictly associative.
\end{itemize}
\end{prop}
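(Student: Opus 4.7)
The plan is to use the skeletal presentation of $\BB$. Fix a skeletal model in which the objects are the elements of $A$, the automorphism group of each object is $H$ (with trivial $A$-action, because the braiding provides a canonical trivialization of the inner automorphisms, as noted earlier in the paper), the associator is $h$, and the braiding is $c$. Every claim reduces to unpacking the axioms of a braided monoidal functor, a monoidal natural transformation, and composition of such, at the level of cochains.

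First I would analyze an object of $\Aut^{\widetilde{\otimes}}(\BB)$. An endofunctor $F$ of the skeletal category acts on objects by a map $\phi: A \to A$ that is a homomorphism because $F$ is monoidal at the level of $\pi_0$ and an automorphism because $F$ is an equivalence. On each hom-set $H$, the action of $F$ is some $\psi_a \in \Aut(H)$; naturality together with the triviality of the $A$-action on $H$ forces $\psi_a$ to be independent of $a$, giving a single $\psi \in \Aut(H)$. The monoidal structure of $F$ consists of isomorphisms $F(a) \otimes F(b) \to F(a + b)$ in the skeletal category; since both sides equal $\phi(a) + \phi(b)$, this is exactly a normalized 2-cochain $k: A \times A \to H$ (normalization coming from the unit coherence of $F$).

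Next I would translate the two coherence axioms into equations \eqref{coch} and \eqref{coch2}. The associator-compatibility diagram for $F$, comparing the source associator $\psi \circ h$ against the target associator $h \circ \phi^3$ via $k$, collapses in additive notation to the identity $k(a,b) + k(a+b, c) + \psi(h(a,b,c)) = h(\phi a, \phi b, \phi c) + k(b,c) + k(a, b+c)$, which rearranges to \eqref{coch}. Similarly, the braiding compatibility axiom $F(c_{a,b}) \circ k(a,b) = k(b,a) \circ c_{F(a), F(b)}$ unwinds to $\psi \circ c + k = k \circ \tau + c \circ \phi^2$, which is \eqref{coch2}.

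For morphisms, a monoidal natural transformation $\eta: (\phi, \psi, k) \Rightarrow (\phi', \psi', k')$ consists of components $\eta_a: F(a) \to F'(a)$. In the skeletal model the hom-set is empty unless $\phi = \phi'$; naturality of $\eta$ with respect to endomorphisms of $a$ combined with the commutativity of $H$ further forces $\psi = \psi'$. The monoidal naturality axiom becomes $\eta(a+b) + k(a,b) = k'(a,b) + \eta(a) + \eta(b)$, i.e., $d\eta = k - k'$, and vertical composition of natural transformations is addition of cochains. Finally, for the tensor product I would compute the monoidal structure on $F \circ F'$ at $(a,b)$: it is the composite of $F$ applied to $k'(a,b)$ with $k$ evaluated at $(\phi' a, \phi' b)$, yielding $k \circ (\phi')^2 + \psi \circ k'$; strict associativity then follows by a direct two-line cochain computation. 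The main obstacle will be the bookkeeping in the coherence diagrams, keeping track of exactly where the twists by $\phi$, $\psi$, and $\tau$ enter; once one is careful about the order in which one applies $F$ and its monoidal structure, all identities are forced.
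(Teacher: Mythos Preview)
Your proposal is correct. The paper itself does not give an argument at all: its entire proof is a citation to \cite[Proposition 14]{joyalstreet-draft}. What you have sketched is essentially the content of that cited result, namely the direct translation of the braided-monoidal-functor and monoidal-natural-transformation axioms into cochain identities on a skeletal model of $\BB$. Each of your steps checks out: $\phi$ and $\psi$ arise from the action on $\pi_0$ and $\pi_1$, the monoidal constraint of $F$ is the normalized $2$-cochain $k$, the associator hexagon yields \eqref{coch}, the braiding square yields \eqref{coch2}, monoidal naturality gives $d\eta = k - k'$ (with $\phi=\phi'$, $\psi=\psi'$ forced as you say), and the composite monoidal structure is computed correctly. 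So your write-up would in fact supply the argument the paper only points to; there is no gap.
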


\begin{proof}
See \cite[Proposition 14]{joyalstreet-draft}. 
\end{proof}

We can now describe actions of $G$ as braided autoequivalences of $\BB$ that lift given actions of $G$ as automorphisms of $A$ and $H$.

\begin{cor}
\label{gaction}
Fix actions of $G$ on $A$ and $H$. An action of $G$ as braided autoequivalences of $\BB$ that induces these two actions is given by the following data.
\begin{itemize}
\item For each $g \in G$, we have a normalized 2-cochain $k_g: A \times A \to H$ satisfying \eqref{coch} and \eqref{coch2}.
\item For each pair $g_1, g_2 \in G$, we have a normalized 1-cochain $\theta_{g_1, g_2}: A \to H$ such that 
\begin{align*}
d\theta_{g_1, g_2} &= k_{g_1g_2} - k_{g_1} \circ (g_2 \lact -)^2 - g_1 \lact k_{g_2}\text{, and}\\
\theta_{g_1g_2, g_3} + \theta_{g_1, g_2} \circ (g_3 \lact -) &= \theta_{g_1, g_2g_3} + g_1 \lact \theta_{g_2, g_3}.
\end{align*}
\end{itemize}

An isomorphism $(k, \theta) \stackrel{\cong}{\Rightarrow} (k', \theta')$ of such actions is given by the data of, for each $g \in G$, a normalized 1-cochain $\eta_g: A \to H$ such that $d\eta_g = k_g' - k_g$, and such that for each pair $g_1, g_2 \in G$,
\[\theta'_{g_1, g_2} - \theta_{g_1, g_2} = \eta_{g_1g_2} - \eta_{g_1} \circ (g_2 \lact -) - g_1 \lact \eta_{g_2}.\]

\end{cor}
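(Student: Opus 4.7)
The plan is to unpack the definition of a monoidal pseudofunctor $a: G \to \Aut^{\widetilde{\otimes}}(\BB)$ using the explicit presentation of $\Aut^{\widetilde{\otimes}}(\BB)$ given in the preceding proposition, and then read off the asserted data and relations.

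First I would specify the object data. A monoidal pseudofunctor from the discrete monoidal category $G$ to $\Aut^{\widetilde{\otimes}}(\BB)$ assigns to each $g \in G$ an object $(\phi_g, \psi_g, k_g)$. The hypothesis that $a$ lifts the fixed actions of $G$ on $A$ and $H$ pins down $\phi_g = (g \lact -)$ on $A$ and $\psi_g = (g \lact -)$ on $H$, so the only free datum is the normalized 2-cochain $k_g : A \times A \to H$, which must satisfy \eqref{coch} and \eqref{coch2} simply because $(\phi_g, \psi_g, k_g)$ is an object of $\Aut^{\widetilde{\otimes}}(\BB)$.

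Next I would analyze the compositors. For each pair $g_1, g_2 \in G$, the pseudofunctor structure supplies an isomorphism
\[(\phi_{g_1}, \psi_{g_1}, k_{g_1}) \circ (\phi_{g_2}, \psi_{g_2}, k_{g_2}) \stackrel{\cong}{\Rightarrow} (\phi_{g_1 g_2}, \psi_{g_1 g_2}, k_{g_1 g_2})\]
in $\Aut^{\widetilde{\otimes}}(\BB)$. By the proposition, the left-hand composite has third component $k_{g_1} \circ (g_2 \lact -)^2 + g_1 \lact k_{g_2}$, and an isomorphism between two such objects is exactly a normalized 1-cochain $\theta_{g_1, g_2}: A \to H$ whose coboundary equals the difference of the third components; this is precisely the first equation on $\theta$. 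The associativity (pentagon) axiom for the pseudofunctor, applied to a triple $(g_1, g_2, g_3)$, compares the two ways of composing three objects; since composition in $\Aut^{\widetilde{\otimes}}(\BB)$ is strictly associative on objects and morphisms add when composed, the pentagon reduces to the additive identity
\[\theta_{g_1 g_2, g_3} + \theta_{g_1, g_2} \circ (g_3 \lact -) = \theta_{g_1, g_2 g_3} + g_1 \lact \theta_{g_2, g_3},\]
which is the second equation. The unit axiom forces $k_e = 0$ and $\theta_{e,g} = \theta_{g,e} = 0$ after a harmless normalization, which is built into the phrase ``normalized'' cochain.

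Finally I would handle isomorphisms. A monoidal natural isomorphism $(k, \theta) \Rightarrow (k', \theta')$ consists, for each $g \in G$, of a morphism $\eta_g : (\phi_g, \psi_g, k_g) \to (\phi_g, \psi_g, k'_g)$ in $\Aut^{\widetilde{\otimes}}(\BB)$, which by the proposition is a normalized 1-cochain with $d\eta_g = k'_g - k_g$. The compatibility condition with the compositors translates directly into the stated equation relating $\theta'_{g_1, g_2} - \theta_{g_1, g_2}$ to $\eta_{g_1 g_2}$, $\eta_{g_1} \circ (g_2 \lact -)$, and $g_1 \lact \eta_{g_2}$. I expect the only step requiring real care to be the pentagon translation: one has to chase the associator through the monoidal structure of $\Aut^{\widetilde{\otimes}}(\BB)$ and verify that the strict associativity of the tensor product on objects makes the comparison collapse to the displayed additive equation rather than producing extra correction terms.
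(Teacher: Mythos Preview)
Your proposal is correct and is exactly the argument the paper has in mind: the Corollary is stated without proof, as an immediate consequence of the preceding Proposition's explicit description of $\Aut^{\widetilde{\otimes}}(\BB)$, and your unpacking of the object data, compositors, pentagon axiom, and monoidal natural isomorphisms is the natural way to fill in the details.
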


Since $\Aut^{\widetilde{\otimes}}(\BB)[1] \times_{\Aut(A)[1]} \widehat{A}[2]$ is the 2-truncation of $\widehat{\BB}[2]$, maps from $G[1]$ to this 2-truncation can be presented as quintuples $(\phi, \psi, k, \theta, \omega)$, where $\phi$ and $\psi$ are the actions of $G$ on $A$ and $H$, $k$ and $\theta$ are the data described in Corollary \ref{gaction}, and $\omega: G \times G \to A$ is a 2-cocycle for $G$ with coefficients in $A$.

We wish to write down an obstruction cocycle in $Z^4(G, H)$ representing the Pontryagin square in terms of the data $(A, H, h, c)$ of the braided 2-group $\BB$ and $(\phi, \psi, k, \theta, \omega)$ of a map $G[1] \to \widehat{\BB}[2]^{\leq 2}$.

\begin{prop}
Let $\BB$ be a braided 2-group with data $(A, H, h, c)$, $a: G \to \Aut^{\widetilde{\otimes}}(\BB)$ an action with data $(\phi, \psi, k, \theta)$, and $[\omega] \in H^2_{\phi}(G, A)$. The Pontryagin square $\Pontr_{\BB, a}([\omega])$ is represented by the 4-cocycle
\begin{align*}
\pi(g_1, g_2, g_3, g_4) &= c(\omega(g_1, g_2), g_1g_2 \lact \omega(g_3, g_4))\\
&\hphantom{=} + h(g_1g_2 \lact \omega(g_3, g_4), \omega(g_1, g_2), \omega(g_1g_2, g_3g_4))\\
&\hphantom{=} - h(g_1g_2 \lact \omega(g_3, g_4), g_1 \lact \omega(g_2, g_3g_4), \omega(g_1, g_2g_3g_4))\\
&\hphantom{=} + h(g_1 \lact \omega(g_2, g_3), g_1 \lact \omega(g_2g_3, g_4), \omega(g_1, g_2g_3g_4))\\
&\hphantom{=} - h(g_1 \lact \omega(g_2, g_3), \omega(g_1, g_2g_3), \omega(g_1g_2g_3, g_4))\\
&\hphantom{=} + h(\omega(g_1, g_2), \omega(g_1g_2, g_3), \omega(g_1g_2g_3, g_4))\\
&\hphantom{=} - h(\omega(g_1, g_2), g_1g_2 \lact \omega(g_3, g_4), \omega(g_1g_2, g_3g_4))\\
&\hphantom{=} + \theta_{g_1, g_2}(\omega(g_3, g_4))\\
&\hphantom{=} - k_{g_1}(g_2 \lact \omega(g_3, g_4), \omega(g_2, g_3g_4))\\
&\hphantom{=} + k_{g_1}(\omega(g_2, g_3), \omega(g_2g_3, g_4)).
\end{align*}
\end{prop}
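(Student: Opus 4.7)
The strategy is to realize the Pontryagin square as the explicit cochain-level failure of a pentagon coherence. By Section \ref{obstruction}, $\Pontr_{\BB, a}([\omega])$ is the pullback along the classifying map $G[1] \to \widehat{\BB}[2]^{\leq 2}$ of the Postnikov $k$-invariant of the fibration $\widehat{\BB}[2] \to \widehat{\BB}[2]^{\leq 2}$; I would compute this class cochain-by-cochain in a skeletal model of the 3-group of braided $\BB$-bitorsors determined by the abelian 3-cocycle $(A, H, h, c)$ and the action data $(\phi, \psi, k, \theta)$.

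First I would construct the putative lift $G[1] \to \widehat{\BB}[2]$. For each $g \in G$, let $\XX_g$ be a braided $\BB$-bitorsor realizing the autoequivalence $a(g)$ and equipped with a distinguished object $e_g$. Using $\omega(g_1, g_2) \in A$ together with $\theta_{g_1, g_2}$ to handle the right bimodule structure, define an equivalence $\mu_{g_1, g_2}: \XX_{g_1} \times_{\BB} \XX_{g_2} \to \XX_{g_1 g_2}$ of braided bitorsors sending $(e_{g_1}, e_{g_2}) \mapsto \omega(g_1, g_2) \lact e_{g_1 g_2}$. The associativity square for $\mu$ on each triple $(g_1, g_2, g_3)$ fails to commute on the nose; I would fill it with a 2-cell $\alpha_{g_1, g_2, g_3}$ constructed from $h$ applied to the three $\omega$-values appearing in the 2-cocycle identity for $\omega$ on $(g_1, g_2, g_3)$, corrected by a $k_{g_1}$-term needed to intertwine the bimodule structures.

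The obstruction $\pi(g_1, g_2, g_3, g_4) \in H$ is then the failure of the $K_5$ associahedron coherence for $\alpha$ on the quadruple. Each face of $K_5$ contributes a summand: the six alternating $h$-terms arise from the six reassociations of the four elements $\omega(g_1, g_2), g_1 g_2 \lact \omega(g_3, g_4), g_1 \lact \omega(g_2, g_3), g_1 \lact \omega(g_2 g_3, g_4)$, each pair of consecutive $h$-terms being related via the 2-cocycle identity for $\omega$; the single $c$-term arises from the face where $\omega(g_1, g_2)$ must be braided past $g_1 g_2 \lact \omega(g_3, g_4)$; and the remaining $\theta$- and $k$-terms come from the faces where the $G$-equivariance correction on $\alpha$ is applied.

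The main obstacle will be the bookkeeping: there are many sign, basepoint, and normalization conventions to choose, and careless choices produce formulas that are cohomologous to but not equal to the stated $\pi$. The 4-cocycle identity $d\pi = 0$ is automatic from the obstruction-theoretic construction, but can be verified directly by expanding $d\pi$ and applying in turn the abelian 3-cocycle equations for $(h, c)$, the identities for $(k, \theta)$ from Corollary \ref{gaction}, and the 2-cocycle equation for $\omega$; all resulting terms then cancel pairwise.
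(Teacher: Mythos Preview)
Your proposal is correct and follows essentially the same route as the paper: both set up a would-be monoidal lift of $\omega$ to $\BB$, introduce an associator 2-cell (the paper's $\Upsilon$, your $\alpha$), and read off $\pi$ as the failure of the pentagon coherence in the skeletal model, with the $c$-term coming from the single braiding move and the $k,\theta$-terms from the equivariance data. The only cosmetic difference is that the paper phrases the lift as an ``extension with section'' $\Omega: G \times G \to \BB$ rather than as a family of bitorsors $\XX_g$ with product maps $\mu$, but by Grothendieck's correspondence these are the same data; note also that the relevant diagram is a hexagon (one edge is the braiding) rather than a literal $K_5$ pentagon, which you implicitly acknowledge by singling out the $c$-face.
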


\begin{proof}
The cocycle $\omega: G \times G \to A$ describes an ``extension with section'': an extension $0 \to A \to E \stackrel{\partial}{\to} G \to 1$ equipped with a section $s: G \to E$. The multiplication on $E$ is described by $\omega$: we have \[(a_1 + s(g_1)) \cdot (a_2 + s(g_2)) = a_1 + g_1 \lact a_2 + \omega(g_1, g_2) + s(g_1g_2).\]
The cocycle condition ensures that this multiplication is associative.

The cocycle $\omega$ lifts uniquely to $\Omega: G \times G \to \BB$. We would like to understand when this cocycle describes an ``extension with section'' of $G$ by $\BB$, but this requires additional data: a collection of isomorphisms
\[\Upsilon_{g_1, g_2, g_3}: \Omega(g_1, g_2) \otimes \Omega(g_1 g_2, g_3) \Rightarrow g_1 \lact \Omega(g_2, g_3)) \otimes \Omega(g_1, g_2 g_3),\]
which corresponds to the associator. Such isomorphisms always exist because $\omega$ is a cocycle.

In order to be a true extension, this associator must satisfy the pentagon identity, which translates to the commutativity of the following diagram.

\tiny
\begin{equation*}
\label{cocycle}
\xymatrix{
\Omega(g_1, g_2) \otimes (g_1g_2 \lact \Omega(g_3, g_4)) \otimes \Omega(g_1g_2, g_3g_4) \ar[r]^-{c \otimes \id} & (g_1g_2 \lact \Omega(g_3, g_4)) \otimes \Omega(g_1, g_2) \otimes \Omega(g_1g_2, g_3g_4) \ar[d]^{\id \otimes \Upsilon}\\
\Omega(g_1, g_2) \otimes \Omega(g_1g_2, g_3) \otimes \Omega(g_1g_2g_3, g_4) \ar[u]^{\id \otimes \Upsilon} \ar[d]_{\Upsilon \otimes \id} & g_1 \lact (g_2 \lact \Omega(g_3, g_4) \otimes  \Omega(g_2, g_3g_4)) \otimes \Omega(g_1, g_2g_3g_4)\\
(g_1 \lact \Omega(g_2, g_3)) \otimes \Omega(g_1, g_2g_3) \otimes \Omega(g_1g_2g_3, g_4) \ar[r]_-{\id \otimes \Upsilon} & g_1 \lact (\Omega(g_2, g_3) \otimes \Omega(g_2g_3, g_4)) \otimes \Omega(g_1, g_2g_3g_4) \ar[u]_{g_1 \lact \Upsilon \otimes \id}
}
\end{equation*}
\normalsize

Translating this diagram into the skeletal setting yields the desired formula for $\pi$ (plus a coboundary corresponding to $\Upsilon$).

\end{proof}

\begin{remark}
It is not \textit{a priori} clear, without our earlier homotopy theoretic discussion, that the cochain $\pi$ is even a cocycle, much less that is well-defined (up to changing it by a coboundary) independently of the choice of rigidifying skeletal data. Indeed, direct computations of these facts at the level of cochains are extremely lengthy.
\end{remark}

\bibliographystyle{amsalpha}
\nocite{*}
\bibliography{extensions}

\end{document}